\newtheorem{theorem}{Theorem}
\newtheorem{lemma}{Lemma}
\newtheorem{statement}{Statement}
\begin{document}
\sloppy
\title{Topology of 4-manifolds that admit non-singular flows with saddle orbits of the same index} 
	\author{V. Galkin, O. Pochinka}
	\maketitle
	\date{}
	
\begin{abstract}
	This paper studies regular topological flows $f^t$ defined on closed {topological} manifolds $M^n$. The chain recurrent set of such a flow consists of a finite number of topologically hyperbolic fixed points and periodic orbits. Like their smooth analogs -- Morse-Smale flows -- regular flows possess a continuous Morse-Bott function that decreases outside the chain recurrent set and is constant on the chain components of the flow. This circumstance leads to a close connection between such flows and the topology of the carrying manifold. In particular, the ambient manifold for non-singular flows (regular flows without fixed points), by the Poincare-Hopf formula, has a zero Euler characteristic. The latter property is a criterion for a manifold $M^n$ to admit a non-singular flow in all dimensions except dimension $n=3$. Thus, in higher dimensions, any odd-dimensional manifold admits a non-singular flow, and the list of even-dimensional manifolds is quite broad; at the very least, it includes all manifolds of the form $M^{n-1}\times\mathbb S^1$, where $M^{n-1}$ is any closed $(n-1)$-manifold.
	
	A surprising result of the present paper is the proof of the fact that in dimension 4, all this variety of carrying spaces can only be achieved if the flow has saddle orbits of different Morse indices. Specifically, for dimensional reasons, the Morse index of a saddle orbit of a non-singular flow $f^t: M^4 \to M^4$ can only take two values, 1 or 2. We prove that non-singular 4-flows with saddle orbits of the same Morse index exist only on skew or direct products of the 3-sphere and the circle, i.e., $M^4\cong\mathbb S^3\tilde\times\,\mathbb S^1$ or $M^4\cong\mathbb S^3\times\mathbb S^1$.
\end{abstract}  

\section{Introduction and formulation of results}
Let $M^n,\,n\geqslant 2$ be a connected closed { topological} $n$-manifold with a metric $d$. Recall that {\it a flow} on the manifold $M^n$ is defined as a family of homeomorphisms $f^t:M^n\to M^n$ continuously dependent on $t\in\mathbb R$ with the group properties:
\begin{itemize}
	\item[1)] $f^0(x)=x$ for any $x\in M^n$;
	\item[2)] $f^t(f^s(x))=f^{t+s}(x)$ for any $s,t\in \mathbb{R}$, $x\in M^n$. 
\end{itemize}
{\it An orbit} of a point $x\in M^n$ for the flow $f^t$ is the set $\mathcal O_x=\{f^t(x), t\in \mathbb{R}\}$, and the orbit $\mathcal O_x$ is called:
\begin{itemize}
\item {\it fixed point}, if $\mathcal O_x=\{x\}$,
\item {\it periodic orbit}, if there are $0<t_1<t_2$ such that $f^{t_1}(x)\neq x$ и $f^{t_2}(x)=x$,
\item {\it regular orbit}, if $f^t(x)\neq x$ for any $t\neq 0$.
\end{itemize} 
All trajectories of the flow, other than a fixed point, are considered to be oriented in accordance with the increasing parameter $t$.

Flows $f^t: M^n\to M^n$ and $f'^{t}:M^n\to M^n$ are called {\it topologically equivalent} if there exists a homeomorphism $h: M^n\to M^n$ that maps the trajectories of the flow $f^t$ to the trajectories of the flow $f'^{t}$  preserving orientation.

For fixed points and periodic orbits of a topological flow, the concept of hyperbolicity is defined as follows. For $q\in\mathbb N$, let 
$$\mathbb B^q=\{(x_1,\dots,x_q) \in \mathbb R^q:\,x_1^2+\dots,+x_q^2\leqslant 1\}\,-\,q\hbox{-ball};$$
$$\mathbb S^{q-1}=\partial\mathbb B^q\,-\,(q-1)\hbox{-sphere};$$
$$\mathbb B^{q}\,\tilde{\times}\,\mathbb S^1\,-\,\hbox{skew product of the q-ball and the circle},$$
that is, the manifold obtained from $\mathbb B^{q}\times[0,1]$ by identifying the balls $\mathbb B^{q}\times{0},\,\mathbb B^{q}\times{1}$ via an orientation-reversing homeomorphism; 
$$\mathbb S^{q-1}\tilde\times\,\mathbb S^1=\partial(\mathbb B^{q}\,\tilde{\times}\,\mathbb S^1).$$ 
Furthermore, the symbol $\otimes$ will denote either $\times$ or $\tilde\times$.

{ Consider a linear flow $a^t_{i}:\mathbb R^n\to\mathbb R^n,\,i\in\{0,\dots,n\},$ given by the formula 
\begin{equation*}
 a ^ t_ {i} (x_1,\dots, x_ {i}, x_ {i+ 1},\dots, x_n) = (2 ^ tx_1,\dots, 2 ^ tx_ {i}, 2 ^ {- t} x_ {i+ 1},\dots,2^{- t} x_n). 
\end{equation*}
By construction, $O(0,\dots,0)$ is a fixed point of the flow $a^t_i$. Let 
\begin{equation*}
 V_{i}=\{(x_1,\dots, x_ {i}, x_ {i+ 1},\dots, x_n)\in\mathbb R^n:\,x^2_1+\dots+x_ {i}^2\leqslant 1,\,x^2_{i+ 1}+\dots+ x_n^2\leqslant 1\},
\end{equation*}
\begin{equation*}
\Sigma^u_{i}= \{(x_1,\dots, x_ {i}, x_ {i+ 1},\dots, x_n)\in\mathbb R^n:\,x^2_1+\dots+x_ {i}^2= 1,\,x^2_{i+ 1}+\dots+ x_n^2\leqslant 1\},
\end{equation*}
\begin{equation*}
	\bar\Sigma^s_{i}= \{(x_1,\dots, x_ {i}, x_ {i+ 1},\dots, x_n)\in\mathbb R^n:\,x^2_1+\dots+x_ {i}^2\leqslant 1,\,x^2_{i+ 1}+\dots+ x_n^2= 1\}.
\end{equation*}
Then the neighborhood $V_i$ of the fixed point $O$ is an {\it $n$-dimensional $i$-handle} 
\begin{equation*}
V_i\cong\mathbb B^i\times\mathbb B^{n-i}=\mathbb B^n
\end{equation*}
with the boundary $$\Sigma_{i}=\Sigma^u_{i}\cup\Sigma^s_{i}\cong \mathbb S^{n-1},$$ part $\Sigma^u_{i}$ of which is called {\it the base of the handle $V_{i}$}.

A fixed point $p$ of a topological flow $f^t$ is called {\it hyperbolic} if there exists its neighborhood $V_{p}\subset M^n$, a number $i_p \in \{0,\dots, n \}$, and a homeomorphism $h_{p}:V_{i_p}\to V_{p}$, establishing the equivalence of the flow $a ^ t_ {i_p}|_{V_{i_p}}$ with the flow $f^t|_{V_{p}}$.  

The neighborhood $V_{p}$ is called {\it canonical} in this case. Its boundary $\Sigma_{p}$ consists of {\it the base} $\Sigma^u_p=H_{p}(\Sigma^u_{i_{p}})$ and the set $\Sigma^s_p=H_{p}(\Sigma^s_{i_p})$. The number $i_p$ is called {\it Morse index} of $p$. Fixed points of indices $n$ and $0$ are called {\it source} and {\it sink} points, respectively, and those with indices $0<i_p<n$ are called {\it saddle} points (see Fig. \ref{nodes+saddles-new}).} 
\begin{figure}[ht]
\centerline{\includegraphics
[height=4cm]{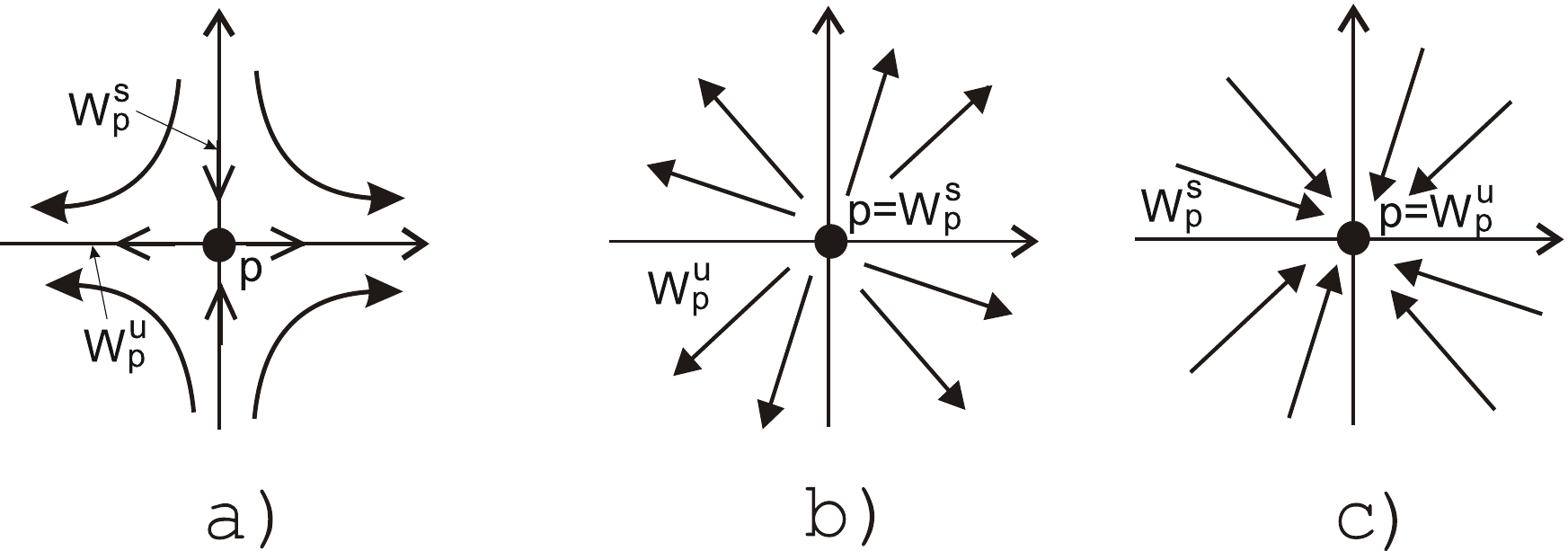}}
\caption{The dynamics in the neighborhood of a hyperbolic fixed point: a) saddle point, b) source point, c) sink point} \label{nodes+saddles-new}
\end{figure}

To define the hyperbolicity of a periodic orbit, let us recall the notion of a suspension. Let $a\colon M\to M$ be a homeomorphism of a topological manifold $M$. Define the homeomorphism $g_{a}\colon M\times \mathbb R \to M\times \mathbb R$ by the formula $$g_{a}(x,r) = (a(x),r-1).$$ 
Then the group of homomorphisms ${g_{a}^k}\cong\mathbb Z$ acts freely and properly discontinuously\footnote{A group of homomorphisms $G$ of a topological space $X$ {\it acts freely} on $X$ if $g(x)\neq x$ for all $x \in X$ and all $g \in G,,g\neq id$. The group $G$ {\it acts properly discontinuously} on $X$ if for every compact set $K \subset X$, the set of those $g\in G$ for which $g(K)\cap K \neq \varnothing$ is finite.} on $M\times \mathbb R$, whereby (see, for example, \cite{Ko}) the orbit space $M_a = (M\times \mathbb R)/ g_{a}$ is a topological manifold, and the natural projection $$v_a\colon M\times \mathbb R\to M_a$$ {is a covering map}. Moreover, the flow $\xi^t\colon M\times \mathbb R\to M\times \mathbb R$, defined by the formula  $$\xi^t(x,r)=(x,r+t),$$ induces a unique flow  $$[a]^t\colon M_\varphi\to M_\varphi,$$  satisfying the condition $[a]^tv_a= v_a \xi^t$ and called the {\it suspension over the diffeomorphism $a$}.

For $i\in\{0,\dots,n-1\},\,\rho,\,\nu\in\{-1,+1\}$ define the diffeomorphism 
$a_{i,\rho,\nu}\colon \mathbb R^{n-1}\to \mathbb R^{n-1}$ by formula 
\begin{equation*}\label{air}
\begin{gathered} 
a_{i,\rho,\nu}(x_1,x_2,\dots, x_i,x_{i+1},x_{i+2},\dots, x_{n-1}) =\\=\left (2\rho x_1,2x_2,\dots 2x_i,\frac{\nu x_{i+1}}{2},\frac{x_{i+2}}{2},\dots,\frac{x_{n-1}}{2}\right).
\end{gathered}
\end{equation*}
Let (see Fig. \ref{barV})
\begin{equation*}\label{vir}
	\bar V_{i}= \{ (x_1,\dots,x_{n-1},r)\in \mathbb R^{n-1}\times\mathbb R :\, 
	x_1^2+\dots+ x^2_i\leqslant 4^{-r}, x^2_{i+1}+\dots+x_{n-1}^2 \leqslant 4^r\},
\end{equation*}
\begin{equation*}
	\bar\Sigma^u_{i}= \{ (x_1,\dots,x_{n-1},r)\in \mathbb R^{n-1}\times\mathbb R :\, 
	x_1^2+\dots+ x^2_i= 4^{-r}, x^2_{i+1}+\dots+x_{n-1}^2 \leqslant 4^r\},
\end{equation*}
\begin{equation*}
	\bar\Sigma^s_{i}= \{ (x_1,\dots,x_{n-1},r)\in \mathbb R^{n-1}\times\mathbb R :\, 
	x_1^2+\dots+ x^2_i\leqslant 4^{-r}, x^2_{i+1}+\dots+x_{n-1}^2= 4^r\},
\end{equation*}
\begin{equation*}\label{virn}
O_{i,\rho,\nu}=v_{a_{i,\rho,\nu}}(Or),V_{i,\rho,\nu}=v_{a_{i,\rho,\nu}}(\bar V_{i}),\,\Sigma^u_{i,\rho,\nu}=v_{a_{i,\rho,\nu}}(\bar\Sigma^u_{i}),\,\Sigma^s_{i,\rho,\nu}=v_{a_{i,\rho,\nu}}(\bar\Sigma^s_{i}).
\end{equation*}
\begin{figure}[h!]
\centerline{\includegraphics
[height=5cm]{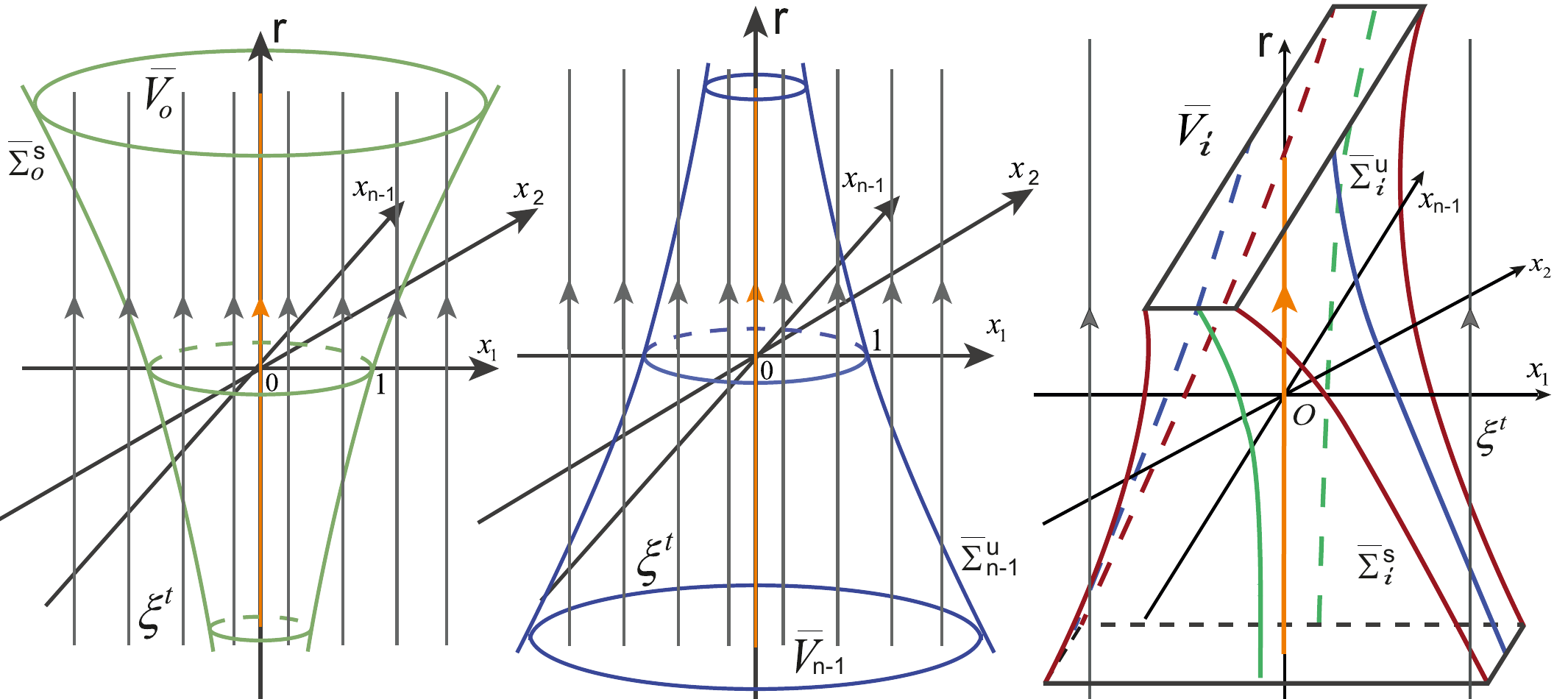}}
\caption{Sets $\bar V_{0},\bar V_{n-1},\bar V_i,0<i<n-1$} \label{barV}
\end{figure}
Then $O_{i,\rho,\nu}$ is a periodic orbit of the flow $[a_{i,\rho,\nu}]^t$. Its neighborhood $V_{i,\rho,\nu}$ is an {\it $n$-dimensional round $i$-handle}  
\begin{equation*}\label{round}
V_{i,\rho,\nu}\cong\mathbb B^{n-1}\otimes\mathbb S^1
\end{equation*}
with boundary $$\Sigma_{i,\rho,\nu}=\Sigma^u_{i,\rho,\nu}\cup\Sigma^s_{i,\rho,\nu}\cong \mathbb S^{n-2}\otimes\mathbb S^1,$$ part $\Sigma^u_{i,\rho,\nu}$ of which is called {\it the base of the round handle $V_{i,\rho,\nu}$}. If $\rho=1,(\rho=-1)$, then the round handle $V_{i,\rho,\nu}$ is called {\it untwisted (twisted)}.

A periodic orbit $\mathcal O$ of a topological flow $f^t:M^n\to M^n$ is called {\it hyperbolic} if there exists its neighborhood $V_{\mathcal O}\subset M^n$, numbers $i_{\mathcal O}\in \{0,1,\dots, n-1 \},\,\rho_{\mathcal O},\nu_{\mathcal O}\in\{-1,+1\}$ and a homeomorphism $H_{\mathcal O}: V_{i_{\mathcal O},\rho_{\mathcal O},\nu_{\mathcal O}}\to V_{\mathcal O}$, establishing the equivalence of the flow $[a_{i_{\mathcal O},\rho_{\mathcal O},\nu_{\mathcal O}}]^t$ with the flow $f^t|_{V_{\mathcal O}}$.

The neighborhood $V_{\mathcal O}$ is called {\it canonical}, its boundary $\Sigma_{\mathcal O}$ consists of the {\it base} $\Sigma^u_\mathcal O=H_{\mathcal O}(\Sigma^u_{i_{\mathcal O},\rho_{\mathcal O},\nu_{\mathcal O}})$ and the set $\Sigma^s_\mathcal O=H_{\mathcal O}(\Sigma^s_{i_{\mathcal O},\rho_{\mathcal O},\nu_{\mathcal O}})$. The number $i_{\mathcal O}$ is called the {\it Morse index} of the hyperbolic periodic orbit $\mathcal O$. Periodic orbits with Morse indices $n-1$ and $0$ are called {\it repelling} and {\it attracting}, respectively, and otherwise -- {\it saddle}. A periodic orbit is called {\it untwisted (twisted)} if its canonical neighborhood is homeomorphic to an untwisted (twisted) round handle.

A topological flow $f^t$ is called {\it regular} if its chain recurrent set $\Omega_{f^t}$ consists of a finite number of connected components -- {\it chain components}, each of which is a hyperbolic fixed point or a hyperbolic periodic orbit. The {\it stable, unstable $($invariant$)$}, respectively, manifolds of a chain component $C$ are the sets
\begin{gather*}
W^s_{C}=\left\{x\in M^n:\lim\limits_{t\to+\infty}\left(\min_{y\in C}d(y,f^{t}(x))\right)=0\right\},\\
W^u_{C}=\left\{x\in M^n:\lim\limits_{t\to-\infty}\left(\min_{y\in C}d(y,f^{t}(x))\right)=0\right\}.
\end{gather*} 
An {\it heteroclinic intersection} is the intersection of invariant manifolds of different saddle chain components. 

A regular flow is called {\it non-singular} if it has no fixed points.

{ Regular flow is a topological generalization of the Morse-Smale flow. Their appearance (see, for example, \cite{8}) is motivated by two factors: 1) the existence of topological manifolds of dimension 4 and higher that lack a smooth structure; 2) the development of methods for the topological classification of smooth systems, which utilize purely topological properties of these systems and thus allow the inclusion of systems that lack smoothness.

The properties of regular flows with a four-dimensional ambient manifold essentially rely on results of three-dimensional topology, which, in particular, makes it possible to establish the following result.

\begin{theorem}\label{smoothing} If a closed topological 4-manifold admits a regular flow, then it is smoothable.
\end{theorem}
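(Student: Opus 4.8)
The plan is to decompose $M^4$ according to the chain recurrent set of the regular flow and to show that each piece carries a smooth structure compatible with the gluings. First I would use the hierarchy of chain components induced by the Morse--Bott function: the attracting components (sinks and attracting periodic orbits) have neighborhoods that are standard handles $V_{i_p}\cong\mathbb B^4$ or round handles $V_{0,\rho,\nu}\cong\mathbb B^3\otimes\mathbb S^1$, all of which are manifestly smooth. Dually, a neighborhood of the repelling components is smooth. The manifold $M^4$ is recovered from a regular neighborhood $A$ of the attractors by successively attaching round handles indexed by the saddle orbits (and, in the singular case, ordinary handles at the saddle fixed points, but those are excluded here since the flow could have fixed points in Theorem~\ref{smoothing} --- so I would keep both handle types), and finally filling in the repellers.

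The key point is that in dimension $4$ the attaching regions of these handles --- subsets of $3$-dimensional level sets of the Morse--Bott function --- are $3$-manifolds, and every topological $3$-manifold has a unique smooth structure (Moise's theorem), and every homeomorphism between smooth $3$-manifolds is isotopic to a diffeomorphism (Moise/Munkres). So I would argue inductively on the filtration: having built a smooth compact $4$-manifold $X$ realizing the flow up through some level, the next chain component contributes a handle whose attaching map is a topological embedding of a $3$-dimensional attaching region into $\partial X$; by the $3$-dimensional smoothing and isotopy results this embedding can be taken smooth, so the enlarged $X$ is again smooth. Carrying this out to the top level gives a smooth $4$-manifold homeomorphic to $M^4$; by the definition of topological equivalence via a homeomorphism $h$, smoothability of $M^4$ follows.

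The main obstacle I expect is organizing the induction so that the attaching maps genuinely land in the boundary $3$-manifolds in a controlled way --- i.e. establishing that $M^4$ admits a topological handle decomposition subordinate to the chain recurrent filtration with all attaching data of dimension $\le 3$. This requires the canonical neighborhoods $V_p$, $V_{\mathcal O}$ supplied by hyperbolicity to fit together along their stable/unstable boundary spheres $\Sigma^s,\Sigma^u$, which are $(n-1)$-spheres or $\mathbb S^{n-2}\otimes\mathbb S^1$; in $n=4$ these are $3$-dimensional, so the transition (gluing) maps are homeomorphisms of $3$-manifolds and hence smoothable. One should also check that the "filling in" of a canonical neighborhood of a repelling orbit or sink amounts to gluing a smooth $\mathbb B^4$ or $\mathbb B^3\otimes\mathbb S^1$ along a $3$-manifold in the boundary, which again is unobstructed in the smooth category by $3$-dimensional results together with the fact that any self-homeomorphism of $\mathbb S^3$ or $\mathbb S^2\otimes\mathbb S^1$ extends over the bounded handle. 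I would finally remark that the $4$-dimensionality is essential: it is exactly what places all the gluing data in dimension $3$, where topological and smooth categories coincide.
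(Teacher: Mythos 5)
Your proposal matches the paper's proof in all essentials: both use the Morse--Bott filtration to decompose $M^4$ into pieces homeomorphic to the canonical (round) handles $\mathbb B^4$ or $\mathbb B^3\otimes\mathbb S^1$, observe that these pieces are smooth and that the attaching maps live on $3$-dimensional boundaries, and then invoke the $3$-dimensional smoothing and isotopy theorems (the paper cites Munkres and Chernavskii; you invoke Moise/Munkres, which serve the same purpose) to replace the gluing homeomorphisms by isotopic diffeomorphisms without changing the homeomorphism type. The closing remark about self-homeomorphisms of $\mathbb S^3$ or $\mathbb S^2\otimes\mathbb S^1$ extending over the handle is unnecessary for the argument and can be dropped, but the core reasoning is sound and coincides with the paper's.
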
}

The main part of the paper is devoted to studying the topology of the carrying manifolds of non-singular four-dimensional flows. To formulate the results obtained by the authors, we outline the already known facts in this research area.

Let $f^t:M^n\to M^n$ be a non-singular flow. By the Poincare-Hopf theorem, the Euler characteristic $\chi(M^n)$ of a manifold $M^n$ that admits a non-singular flow is zero. Smooth analogues of non-singular flows are called {\it NMS-flows} (Morse-Smale flows without fixed points).

D. Asimov \cite{Asimov75} showed that a smooth closed $n$-manifold $M^n$, $n > 3$, admits an NMS-flow $\iff$ it admits a  decomposition on the untwisted round handles $\iff \chi(M^n)=0$.

On two-dimensional surfaces only the torus and the Klein bottle admit non-singular flows, and their classification follows, for example, from the works of M. Peixoto, A. Oshemkov, V. Sharko, D. Malyshev, V. Kruglov, O. Pochinka \cite{Peix}, \cite{OS}, \cite{MaKrPo_OmSt}.

J. Morgan \cite{Morgan79} proved that an irreducible closed orientable 3-manifold admits an NMS-flow if and only if it is a graph manifold. From this, it follows, for example, that hyperbolic 3-manifolds do not admit NMS-flows. The classification of three-dimensional NMS-flows has been obtained only for some special cases. Necessary and sufficient conditions for the equivalence of NMS-flows, in which at least one invariant manifold of any saddle orbit does not contain heteroclinic intersections, follow from the Y. Umansky paper  \cite{Umansky}. However, these conditions do not allow one to judge the admissibility of a particular flow on a given manifold. In the case of a small number of periodic orbits, an exhaustive classification of non-singular flows follows from the works of B. Yu, O. Pochinka, D. Shubin \cite{Yu}, \cite{PoSh22}, \cite{PoSh23}, \cite{PoSh24}. In particular, the results of these works imply that the carrying 3-manifold of an NMS-flow with no more than one saddle orbit is homeomorphic to either a lens space, a connected sum of lens spaces, or a small Seifert manifold.

In the work of O. Pochinka and D. Shubin \cite{PoSh22}, a complete topological classification of non-singular $n$-flows ($n>3$) without saddle orbits was obtained. It was established that the carrying manifold of such flows is homeomorphic to $\mathbb S^{n-1}\otimes\,\mathbb S^1$, and each such manifold admits exactly two equivalence classes of the considered flows.

In the works of V. Galkin, O. Pochinka, and D. Shubin \cite{GPS24}, \cite{GP25}, \cite{GPS25}, a complete topological classification of non-singular flows without heteroclinic intersections on closed orientable 4-manifolds was obtained.

For dimensional reasons, the Morse index of a saddle orbit of a non-singular flow $f^t: M^4 \to M^4$ can only take two values, 1 or 2. At the same time, the list of 4-manifolds that admit non-singular flows is quite extensive; at the very least, it includes all manifolds of the form $M^{3}\times\mathbb S^1$, where $M^{3}$ is any closed $3$-manifold. A surprising result of the present paper is the proof of the following fact.

\begin{theorem}\label{s3s1s} $\mathbb S^3\times\mathbb S^1\,(\mathbb S^3\tilde\times\,\mathbb S^1)$ is the only connected closed orientable (non-orientable) 4-manifold that admits a non-singular flow, all of whose saddle orbits have the same Morse index. 
\end{theorem}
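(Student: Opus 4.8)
The plan is to exploit the handle decomposition coming from the flow. Since $f^t\colon M^4\to M^4$ is non-singular, the union of canonical neighborhoods of all attracting orbits, all repelling orbits, and all saddle orbits gives a decomposition of $M^4$ into round handles of indices $0$, $1$, $2$, and $3$; by hypothesis all saddle orbits have the same index, so only round handles of index $0$, $3$, and of one single intermediate index — say index $1$ (the index $2$ case is symmetric under reversing the flow) — occur. First I would pass to the attracting filtration: let $A$ be the union of the canonical neighborhoods of all attracting orbits together with all the round $1$-handles; then $A$ is a compact $4$-manifold with boundary, $M^4\setminus\mathrm{int}\,A$ is a disjoint union of round $3$-handles $\mathbb B^3\otimes\mathbb S^1$ (one per repelling orbit), and $M^4$ is obtained by gluing these $3$-handles to $A$ along $\partial A$. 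The key point is that a round $3$-handle is $\mathbb B^3\otimes\mathbb S^1$, whose boundary is $\mathbb S^2\otimes\mathbb S^1$, so each boundary component of $A$ must be homeomorphic to $\mathbb S^2\times\mathbb S^1$ or to the twisted $\mathbb S^2\tilde\times\,\mathbb S^1$.

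The heart of the argument is therefore to understand $A$. The piece $A_0$ = union of canonical neighborhoods of attracting orbits is a disjoint union of copies of $\mathbb B^3\otimes\mathbb S^1$, so $\partial A_0$ is a disjoint union of copies of $\mathbb S^2\otimes\mathbb S^1$. Attaching a round $1$-handle $\mathbb B^1\times\mathbb B^2\otimes\mathbb S^1$ to a $4$-manifold with $3$-dimensional boundary amounts to attaching, on the boundary level, a "round $1$-handle of $3$-manifolds", i.e. performing a $\mathbb B^1\times\mathbb B^1\otimes\mathbb S^1$-surgery. I would invoke (or reprove) the three-dimensional fact that attaching a round $1$-handle to a disjoint union of copies of $\mathbb S^2\otimes\mathbb S^1$ along the base circles, in such a way that the result is again a disjoint union of copies of $\mathbb S^2\otimes\mathbb S^1$, forces each handle to connect two copies in the "trivial" way: the attaching circle is isotopic into a $\mathbb S^2\times\{pt\}$ slice, the number of boundary components drops by one, and — crucially — the resulting $3$-manifold is still a (single, if we track one chain of attachments) copy of $\mathbb S^2\otimes\mathbb S^1$. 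Iterating over all round $1$-handles and using that the total assembly $M^4$ is closed and connected, I expect to conclude that there is exactly one attracting orbit and exactly one repelling orbit, that the round $1$-handles are attached trivially, and that $A\cong\mathbb B^3\otimes\mathbb S^1$ with $\partial A\cong\mathbb S^2\otimes\mathbb S^1$; gluing in the single round $3$-handle then yields $M^4\cong\mathbb S^3\otimes\mathbb S^1$.

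The step I expect to be the main obstacle is exactly the rigidity statement for $3$-manifold round $1$-handle attachments: one must rule out that a sequence of such surgeries, starting and ending at disjoint unions of $\mathbb S^2\otimes\mathbb S^1$'s, produces anything other than the trivial tower — e.g. that it cannot momentarily create a more complicated $3$-manifold (a connected sum, a nontrivial $\mathbb S^1$-bundle over a surface) that is later "repaired". Here I would use that $\partial A$ is \emph{prime}: each component must be $\mathbb S^2\otimes\mathbb S^1$, which by the prime decomposition of $3$-manifolds and the classification of $\mathbb S^2$-bundles over $\mathbb S^1$ is extremely restrictive, together with a careful bookkeeping of first homology / the number of $\mathbb S^2\times\{pt\}$ spheres (which must stay non-separating but of "rank one" at every stage). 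An alternative, cleaner route for this obstacle is to appeal to Theorem 1: $M^4$ is smoothable, so $M^4$ carries a genuine NMS-flow with the same round-handle data, and then apply Asimov's theory of round-handle decompositions together with the known low-complexity classification results (the cited works of Yu, Pochinka, Shubin on $3$-flows and of Pochinka–Shubin on $n$-flows without saddles) to the $3$-dimensional cross-section; the orientability dichotomy ($\times$ versus $\tilde\times$) then comes from whether the gluing homeomorphisms of the single round $3$-handle, equivalently the monodromy of the $\mathbb S^1$-factor, is orientation-preserving or not.
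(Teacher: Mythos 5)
Your overall architecture matches the paper's: pass to the round-handle filtration, identify that every boundary component must be $\mathbb S^2\otimes\mathbb S^1$, and conclude $M^4\cong\mathbb S^3\otimes\mathbb S^1$ by the final $3$-handle attachment. But there are two substantive gaps, one of which you explicitly flag as ``the main obstacle'' without resolving.

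The central gap is precisely the rigidity step. You gesture at ``primality'' and ``bookkeeping of first homology,'' but this is not enough: starting from $\mathbb S^2\times\mathbb S^1$ and removing a neighborhood of a compressible torus, then regluing two solid tori, generically produces lens spaces $L_{p,q}$, connected sums $L_{p,q}\#E$, and pieces $(\mathbb S^3)_{p,q}$ -- none of which are ruled out by primality or by $H_1$ alone. The paper invents a dedicated tool for this: the notion of $E$-irreducibility ($\beta_1\leq1$, no $\mathbb S^2\times\mathbb S^1$ summand) together with Lemmas on surgery along a torus, which show that any $E$-irreducible component persists under inverse handle attachments. The actual argument is a two-sided induction: if any intermediate level $Q_j$ had an $E$-irreducible component, pushing it down to $Q_{k_0}$ (a disjoint union of $E$'s) gives a contradiction; and starting from $Q_{k-1}\cong E$ and working downward, the explicit classification of $A^T$ (Lemma~\ref{decoo}) plus a calculation with $H_1((\mathbb S^3)_{p,q})$ forces every component to be $E$. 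Without something like this, your plan does not close.

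Your second gap is the stated expectation of ``exactly one attracting orbit and exactly one repelling orbit.'' The repelling side is correct but requires Franks' theorem (Statement~\ref{thmA}, item (b): $k_2\geq k_3-1$, so $k_3=1$ once $k_2=0$), which you never invoke. The attracting side is simply false: the paper's constraints are only $k_0\geq 1$ and $k_1\geq k_0-1$, and the introduction explicitly notes $k_1$ may be any number $\geq k_0-1$. The filtration pieces $M_j$ are shown, one by one, to remain disjoint unions of $\mathbb B^3\times\mathbb S^1$ under $1$-handle attachment, with the ``merging'' case (your ``trivial tower'') and a second, non-merging case both appearing in Lemma~\ref{UiL}; you cannot shortcut to a single sink orbit. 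Finally, your fallback route via Theorem~\ref{smoothing} plus existing classifications does not apply: \cite{PoSh22} treats flows with no saddle orbits, and the three-dimensional classification results do not directly control the cross-sections here. The paper instead handles the non-orientable case by a double cover, Kwasik's classification of free involutions on $\mathbb S^3\times\mathbb S^1$, and a Betti-number argument (again via Franks) to exclude $\mathbb{RP}^4\#\mathbb{RP}^4$; your ``monodromy'' heuristic is not a substitute for that argument.
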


Note that Theorem \ref{s3s1s} for flows that are suspensions over regular 3-homeomorphisms without periodic orbits and with saddle points of the same index directly follows from the work \cite{OP24}, which established that all such homeomorphisms are admitted only by the 3-sphere. However, not all flows considered in Theorem \ref{s3s1s} are suspensions. In particular, according to \cite{OP24}, for any suspension, the number $k_1$ of saddle orbits and the number $k_0$ of attracting orbits are related by the equality $k_1=k_0-1$. Meanwhile, as follows from the proof of Theorem \ref{s3s1s}, in the general case, the number of saddle orbits can be any number satisfying the inequality $k_1\geqslant k_0-1$.

{\bf Acknowledgments.} {The work was supported by project ``Symmetry. Information. Chaos'' within the framework of Program of fundamental research of HSE in 2025. The authors thank P.M. Akhmetiev and E.Ya. Gurevich for useful discussions.}

\section{Necessary information about regular flows} 
Recall that a topological flow $f^t$, given on a closed connected topological manifold $M^n$, is called {\it regular} if its chain-recurrent set $\Omega_{f^t}$ consists of a finite number chain components, each of which is a hyperbolic fixed point or a hyperbolic periodic orbit. By introducing the Smale relation on the set $\{C_j\}$ of chain components of the flow $f^t$ by the condition 
\begin{equation}\label{Smor}
C_j\prec C_{j'}\iff W^s_{C_j}\cap W^u_{C_{j'}}\neq\varnothing,
\end{equation} 
one can prove (see, for example, \cite[Lemma 2]{PoZi_RCD}) the absence of {\it cycles} -- sets of pairwise distinct components $C_{1},C_{2},\cdots, C_{m}$ satisfying the condition $$C_{1}\prec C_{2}\prec\cdots\prec C_{m}\prec C_{1}.$$
Then, according to the Szpilrajn theorem \cite{Szp}, the relation \eqref{Smor} can be extended by transitivity to a total order relation.

\begin{statement}[\cite{PoZi_RCD}, Theorem 1)]\label{main} Let $f^t:M^n\to M^n$ be a regular flow with an ordered set of chain components $C_1\prec\dots\prec C_k.$ Then
\begin{enumerate}
\item[1)] $
M^n=\bigcup\limits_{j=1}^kW^u_{C_{j}}=\bigcup\limits_{j=1}^kW^s_{C_{j}}$; 
\item[2)]  unstable $W^u_{p}$ $($stable $W^s_{p})$ manifold of the fixed point $C_j=p$ is a topological submanifold of the manifold $M^n$, such that
$W^u_{p}\cong\mathbb R^{i_{p}}\,(W^s_p\cong\mathbb R^{n-i_{p}})$;
\item[3)] unstablw $W^u_{\mathcal O}$ $($stable $W^s_{\mathcal O})$ the manifold of the periodic orbit $C_j=\mathcal O$ is a topological submanifold of the manifold $M^n$ such that

$W^u_{\mathcal O}\cong\mathbb R^{i_{\mathcal O}}\times\mathbb S^1\,(W^s_{\mathcal O}\cong\mathbb R^{n-i_{\mathcal O}-1}\times\mathbb S^1),\,\rho_{\mathcal O}=+1\,(\nu_{\mathcal O}=+1),$

$W^u_{\mathcal O}\cong\mathbb R^{i_{\mathcal O}}\widetilde{\times}\,\mathbb S^1\,(W^s_{\mathcal O}\cong\mathbb R^{n-i_{\mathcal O}-1}\widetilde{\times}\,\mathbb S^1),\,\rho_{\mathcal O}=-1\,(\nu_{\mathcal O}=-1)$; 
\item[4)] 
 ${\rm cl}(W^u_{C_j})\setminus W^u_{C_j}\subset\bigcup\limits_{l=1}^{j-1}W^u_{C_l}\,\,({\rm cl}(W^s_{C_j})\setminus W^s_{C_j}\subset\bigcup\limits_{l=j+1}^{k}W^s_{C_l})$. 
\end{enumerate}
\end{statement}
	
\begin{statement}[\cite{PoZi_RCD}, Theorem 2]  Any regular flow $f^t:M^n\to M^n$ have a continuous Morse-Bott energy function $\varphi:M^n\to\mathbb R$, whose critical points are either non-degenerate and have the same index as the corresponding fixed point, or form non-degenerate one-dimensional manifolds and have the same index as the corresponding periodic orbit. Moreover, for any fixed order $C_1\prec\dots\prec C_k$ of the chain components of the flow $f^t$, the function $\varphi$ can be constructed such that $\varphi(C_j)=j,\,j\in \{1,\dots, k\}$.   
\label{enflow}
\end{statement}

Let $\varphi: M^n\to \mathbb R$ be a Morse-Bott energy function of a regular flow $f^t:M^n\to M^n$ such that $\varphi(C_j)=j$ for the ordered chain components $C_1\prec\dots\prec C_k$. For $j\in {1,\dots, k-1}$, let $$M_{j}=\varphi^{-1}\left(1,j+\frac12\right).$$ From the properties of the Morse-Bott function, it follows that the filtration  
\begin{equation}\label{filtr}
\varnothing=M_0\subset M_1\subset M_2\subset\dots \subset M_{k-1}\subset M_k=M^n
\end{equation}
is a  decomposition of the manifold $M^n$ into round handles, such that
\begin{equation}\label{rouhan}
M_j\cong M_{j-1}\cup_{\psi_j} V_{{C_j}}
\end{equation} 
for some embedding $\psi_j:\Sigma^u_{C_j}\to\partial M_{j-1}$.  

\begin{statement}[\cite{Franks78}, Theorem A]\label{thmA}
Let $k_i$ be the number of untwisted round $i$-handles in the decomposition \eqref{filtr} and 
$\beta_i = \dim H_i(M^n, \mathbb Z).$  Then

(a) $k_0\geq \beta_0$, $k_i\geq \beta_i-\beta_{i-1}+\dots\pm\beta_0$, $i\geq 1$;

(b) $k_1\geq k_0-1$, $k_{n-2}\geq k_{n-1}-1$;

(c) if $k_{i-1}=k_{i+1}=0$ and $\beta_i-\beta_{i-1}+\dots\pm\beta_0\leq 0$, then $k_i=0$.
\end{statement}

\section{Smoothability of the carrying manifold of a regular 4-flow}

This section is devoted to the proof of Theorem \ref{smoothing}: If a closed topological 4-manifold admits a regular flow, then it is smoothable.

\begin{proof} Let $C_1,\dots, C_k$ be the ordered chain components of the the regular flow $f^t:M^4\to M^4$. From the definition of a continuous Morse-Bott function, it follows that  the boundary of each set $M_j$ in the filtration \eqref{filtr} is a locally flat three-dimensional submanifold of the manifold $M^4$. Moreover, by \eqref{rouhan}, the closure of the set $M_{j+1}\setminus M_j$ is homeomorphic to the canonical neighborhood $V_{C_j}$, and hence homeomorphic to one of the manifolds $\mathbb B^4$, $\mathbb B^{3}\otimes\mathbb S^1$. Since the listed manifolds are smoothable, the manifold $M^4$ is obtained by gluing smooth compact manifolds $$M^4=M_1\cup\, {\rm cl}\,(M_2\setminus M_1)\cup \cdots \cup \,{\rm cl}\,(M_k\setminus M_{k-1})$$along some homeomorphisms on their three-dimensional boundaries. According to \cite{Munc60}, any homeomorphism of a three-dimensional manifold can be approximated by a diffeomorphism, which, by \cite{Ch69}, is isotopic to it. Since gluing two manifolds along isotopic homeomorphisms yields homeomorphic manifolds, {by replacing the attaching homeomorphisms with diffeomorphisms isotopic to them}, we obtain a smooth copy of the manifold $M^4$.
\end{proof}
 
\section{Reducing the proof of Theorem \ref{s3s1s} to the orientable case}\label{ori}
Let $f^t:M^4\to M^4$ be a non-singular flow, all of whose saddle orbits have the same Morse index. Since this index can be 1 or 2, without loss of generality, we will  consider flows all of whose saddle orbits have Morse index 1 (the case where all saddle orbits have Morse index 2 reduces to the considered one by time reversal).

According to Theorem \ref{smoothing}, the manifold $M^4$ is smoothable. Then, in the case of non-orientability of $M^4$, there exists (see, for example, \cite[Lemmas 5.1, 5.2]{BPY}) a two-fold covering $p:\bar M^4\to M^4$, where $\bar M^4$ is a connected closed orientable 4-manifold that admits a non-singular flow $\bar f^t:\bar M^4\to\bar M^4$, all of whose saddle orbits have Morse index 1, covering the flow $f^t$, that is,
$p\bar f^t=f^tp.$  
\begin{lemma}\label{neskew} Let $M^4$ be a connected closed non-orientable topological 4-manifold that admits a non-singular flow $f^t: M^4\to M^4$, all of whose saddle orbits have Morse index 1. Then, if $\bar M^4\cong\mathbb S^3\times\mathbb S^1$, then $M^4\cong\mathbb S^3\tilde\times\,\mathbb S^1$.
\end{lemma}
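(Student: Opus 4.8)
The plan is to exploit the covering $p:\bar M^4\to M^4$ together with the hypothesis $\bar M^4\cong\mathbb S^3\times\mathbb S^1$ to pin down the deck transformation $\tau$ of the two-fold covering, and then to identify the quotient $M^4=\bar M^4/\tau$ with $\mathbb S^3\tilde\times\,\mathbb S^1$. First I would recall that a free $\mathbb Z_2$-action on $\mathbb S^3\times\mathbb S^1$ whose quotient is non-orientable must reverse orientation; since $\tau$ is a deck transformation of the orientation double cover, this is automatic. The map $\tau$ induces an automorphism $\tau_*$ of $\pi_1(\bar M^4)\cong\mathbb Z$ and of $H_1(\bar M^4;\mathbb Z)\cong\mathbb Z$. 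I would argue that $\tau_*$ acts as $+1$ on $\pi_1$ (the covering $p$ corresponds to an index-two subgroup of $\pi_1(M^4)$; because $\pi_1(\bar M^4)\cong\mathbb Z$ and $\bar M^4\to M^4$ is the orientation cover, $\pi_1(M^4)$ is an extension of $\mathbb Z_2$ by $\mathbb Z$, and the orientation character is nontrivial, which forces the action of the generator of $\mathbb Z_2$ on $\mathbb Z=\pi_1(\bar M^4)$ to be trivial — otherwise $\pi_1(M^4)$ would be the infinite dihedral group, whose orientation cover has $\pi_1$ equal to $\mathbb Z$ but with the generator acting by $-1$, which I need to check is excluded by smoothability/structure of the flow, or alternatively handled directly). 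So $\pi_1(M^4)\cong\mathbb Z\oplus\mathbb Z_2$ or $\mathbb Z$; combined with the orientation character this determines $M^4$ up to the question of the $\mathbb S^3$-bundle structure.

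The cleaner route, which I would actually carry out, is to use the round-handle decomposition rather than purely algebraic topology. By Statement~\ref{thmA}(c) applied to $\bar M^4$: since $\bar M^4\cong\mathbb S^3\times\mathbb S^1$ has $\beta_0=\beta_1=1$, $\beta_2=0$, and the flow $\bar f^t$ has no saddle orbits of index $2$ (so $k_3=k_2=0$ in the decomposition, using that index-$3$ handles are repellers and there is at least one, but index-$2$ handles are absent by hypothesis, and we may also rule out $k_1$ being forced large), the round-handle decomposition of $\bar M^4$ is extremely constrained — it should reduce, after handle cancellation, to one $0$-handle, one $1$-handle, no $2$-handles, one $3$-handle, i.e.\ the standard genus-one decomposition $\bar M^4=(\mathbb B^3\times\mathbb S^1)\cup(\mathbb B^3\times\mathbb S^1)$. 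The deck transformation $\tau$ then either preserves or swaps the two halves; I would show it preserves each half (since it is the orientation cover deck map and each half is a product $\mathbb B^3\times\mathbb S^1$ which has both orientations, $\tau$ restricted to one half is an orientation-reversing involution of $\mathbb B^3\times\mathbb S^1$ conjugate to the standard one), so that $M^4$ is obtained by gluing two copies of $\mathbb B^3\tilde\times\,\mathbb S^1$, which is exactly $\mathbb S^3\tilde\times\,\mathbb S^1$ — or more directly, $M^4=\bar M^4/\tau$ where $\tau$ acts on $\mathbb S^3\times\mathbb S^1=\mathbb S^3\times\mathbb R/\mathbb Z$ by $(x,r)\mapsto(A(x),r+\tfrac12)$ for an orientation-reversing $A\in O(4)$, and this quotient is by definition $\mathbb S^3\tilde\times\,\mathbb S^1$.

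The main obstacle I anticipate is showing that $\tau$ does \emph{not} interchange the two halves of the decomposition, or equivalently that $\tau_*=+\mathrm{id}$ on $\pi_1(\bar M^4)\cong\mathbb Z$; if $\tau$ swapped the two solid-torus-like pieces, then $\tau_*$ would act nontrivially and $M^4$ would be a different manifold (with finite $\pi_1$ or with $\pi_1$ infinite dihedral), and I would need to exhibit a flow-theoretic or homological contradiction with the hypothesis that all saddle orbits of $f^t$ (hence of $\bar f^t$) have index $1$. I expect the resolution to use Statement~\ref{main}(1)–(4): the ordered chain components and their invariant manifolds give a flow-adapted handle structure whose $1$-handle (the single saddle orbit, or the unique one surviving cancellation) has a base $\Sigma^u\cong\mathbb S^2\otimes\mathbb S^1$ that is carried to a locally flat copy inside $\partial M_{j-1}$; tracking orientations of these gluing data through the covering $p$ forces the twisting, and thus forces $M^4\cong\mathbb S^3\tilde\times\,\mathbb S^1$. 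A secondary technical point is the handle-cancellation step itself — reducing an a priori arbitrary round-handle decomposition of $\mathbb S^3\times\mathbb S^1$ with $k_2=0$ to the standard one — which I would either cite from Asimov/Franks-type results or prove by an explicit cancellation of paired $0$–$1$ and $2$–$3$ handles using the constraints in Statement~\ref{thmA}.
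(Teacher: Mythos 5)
Your proposal and the paper's proof both start from the observation that $M^4=\bar M^4/\iota$ for a free involution $\iota$ on $\bar M^4\cong\mathbb S^3\times\mathbb S^1$, but from there they diverge sharply, and your version has a genuine unresolved gap at precisely the point that carries the content of the lemma.

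The paper's route is short: by the Jahren--Kwasik classification of free involutions on $\mathbb S^3\times\mathbb S^1$ (cited as \cite{Kwasik}), the quotient is one of exactly four manifolds $\mathbb S^3\times\mathbb S^1$, $\mathbb S^3\tilde\times\,\mathbb S^1$, $\mathbb RP^3\times\mathbb S^1$, $\mathbb RP^4\#\mathbb RP^4$; of these only $\mathbb S^3\tilde\times\,\mathbb S^1$ and $\mathbb RP^4\#\mathbb RP^4$ are non-orientable, and the latter is eliminated by Franks's inequality from Statement~\ref{thmA}(a): with $\beta_0=\beta_2=1$, $\beta_1=0$ one gets $k_2\geqslant\beta_2-\beta_1+\beta_0=2$, contradicting the absence of index-$2$ saddle orbits. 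That one invocation of \cite{Kwasik} is what lets the argument stay at the level of a list-and-eliminate.

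You, by contrast, try to analyze the deck transformation $\tau$ directly and never eliminate the bad case. Your own writeup flags the obstacle correctly: you must show $\tau_*=+\mathrm{id}$ on $\pi_1(\bar M^4)\cong\mathbb Z$, i.e.\ that $\tau$ does not act so as to make $\pi_1(M^4)$ infinite dihedral. But that case is not a phantom --- it is exactly the involution whose quotient is $\mathbb RP^4\#\mathbb RP^4$, and a priori nothing in the generic topology of the situation forbids it. You gesture at ``a flow-theoretic or homological contradiction'' and at ``tracking orientations of gluing data through the covering'' but do not supply either; as it stands the proposal proves only that $M^4$ is $\mathbb S^3\tilde\times\,\mathbb S^1$ \emph{if} $\tau$ preserves the two solid-torus-times-circle halves, which is assuming the conclusion. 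The needed flow-theoretic input is precisely the Franks-type inequality applied to the quotient $M^4$, not to $\bar M^4$: the hypothesis $k_2=0$ forces $\beta_2-\beta_1+\beta_0\leqslant 0$, which $\mathbb RP^4\#\mathbb RP^4$ violates. Your secondary claim --- that an arbitrary round-handle decomposition of $\mathbb S^3\times\mathbb S^1$ with $k_2=0$ cancels down to the standard genus-one decomposition, and that $\tau$ can be isotoped to respect it --- is also unsupported and would require substantial additional work even if the main gap were filled; round-handle cancellation compatible with a given involution is not something you can wave at. In short: replace the heuristic deck-transformation analysis by the citation of \cite{Kwasik} to get the four candidates, then apply Statement~\ref{thmA}(a) to kill $\mathbb RP^4\#\mathbb RP^4$; that is both the missing step and the whole proof.
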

\begin{proof} Since $p:\bar M^4\to M^4$ is a two-fold covering, $M^4$ is the orbit space $\bar M^4/\iota$ of the action of a fixed points free involution $\iota:\bar M^4\to \bar M^4$. Since $\bar M^4\cong\mathbb S^3\times\mathbb S^1$, according to \cite{Kwasik}, $\bar M^4/\iota$ is homeomorphic to one of the four manifolds $\mathbb S^3\times\mathbb S^1,\,\mathbb S^3\tilde\times\mathbb S^1,\,\mathbb RP^3\times\mathbb S^1,\,\mathbb RP^4\# \mathbb RP^4$, where $\mathbb RP^n$ is $n$-dimensional real projective space. Among the listed spaces, the non-orientable ones are  
	$\mathbb S^3\tilde\times\mathbb S^1$ and $\mathbb RP^4\# \mathbb RP^4$. Let us show that the manifold $\mathbb RP^4\# \mathbb RP^4$ does not admit non-singular flows all of whose saddle orbits have Morse index 1.

Assume the contrary: there exists such a  flow $f^t:\mathbb RP^4\# \mathbb RP^4\to \mathbb RP^4\# \mathbb RP^4$. Since the manifold $\mathbb RP^4\# \mathbb RP^4$ has Betti numbers $\beta_0=\beta_2=\beta_4=1$, $\beta_1=\beta_3=0$, it follows from item (a) of Statement \ref{thmA} that the number $k_2$ of untwisted saddle orbits of index 2 satisfies the estimate: $k_2\geqslant\beta_2-\beta_1+\beta_0=2$. This contradicts the assumption.
\end{proof}

\section{Dynamically ordered filtration of the flow $f^t$}
In this section, we establish the structure of the periodic orbits of a non-singular flow $f^t:M^4\to M^4$, all of whose saddle orbits have Morse index 1. Taking into account Lemma \ref{neskew}, we will assume that $M^4$ is an orientable manifold. In this case, all attracting and repelling orbits of the flow are untwisted. Without loss of generality, we can assume that all saddle orbits of the flow $f^t$ are untwisted also, since otherwise this can be achieved by applying a period-doubling bifurcation to each saddle orbit with a non-orientable unstable manifold (see Fig. \ref{flip}). 
\begin{figure}[h!]
\centerline{\includegraphics
[height=3.4cm]{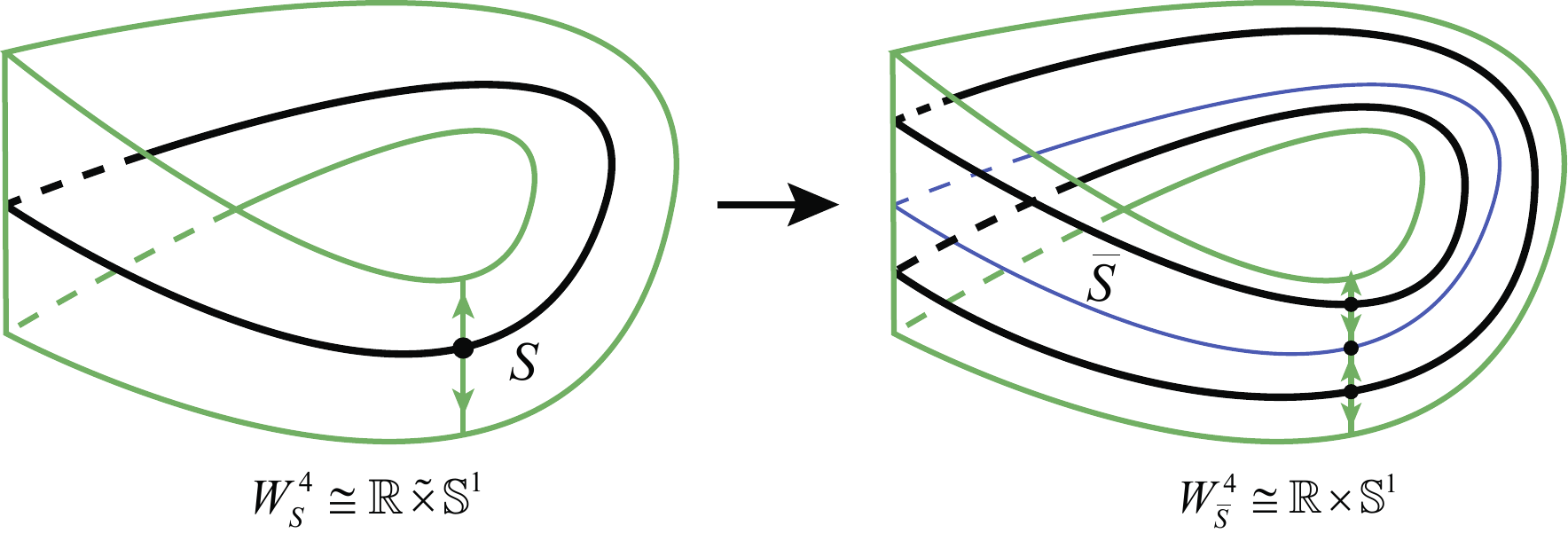}}
\caption{Period-doubling bifurcation of a saddle orbit $S$} \label{flip}
\end{figure} 
Then $k_i,\,i\in\{0,1,2,3\}$ is the number of the periodic orbits of the flow $f^t$ with the Morse index $i$.  
As $\beta_0=1$ due to the connectedness of the manifold $M^4$, then Statement \ref{thmA} gives  the following structure of the periodic orbits of the flow $f^t$:
\begin{equation}\label{N0}
k_0\geqslant 1,
\end{equation} 
\begin{equation}\label{N1}
k_1\geqslant k_0-1,
\end{equation}
\begin{equation}\label{N2}
k_2=0,
\end{equation}
\begin{equation}\label{N3}
k_3=1.
\end{equation}

From relations \eqref{N0}-\eqref{N3} it follows that the total number of periodic orbits of the flow $f^t$ is $k=k_0+k_1+1$. Let us renumber the periodic orbits in order of increasing Morse index and such that the introduced order, called {\it dynamic}, satisfies the Smale relation:
\begin{equation}\label{dynod}
\mathcal O_1\prec\dots\prec\mathcal O_{k_0}\prec \mathcal O_{k_0+1}\prec\dots\prec\mathcal O_{k-1}\prec \mathcal O_{k},
\end{equation}
where $\mathcal O_j,\,j\in\{1,\dots,k_0\}$ -- attracting orbits, $\mathcal O_j,\,j\in\{k_0+1,\dots,k-1\}$ -- saddle orbits, $\mathcal O_{k}$ -- repelling orbit. Let 
\begin{equation}\label{fil}
\varnothing=M_0\subset M_1\subset M_2\subset\dots \subset M_{k-1}\subset M_k=M^4\, 
\end{equation} be the round handle  decomposition corresponding to the order \eqref{dynod}.

To study the topology of the filtration sets in \eqref{fil}, we will introduce some notation and prove auxiliary facts.

\section{Surgery of a 3-manifold along a 2-torus}
Let $L_{p,q}$ denote the lens space, $E=\mathbb S^2\times\mathbb S^1=L_{0,1}$, $P^{n+1}=\mathbb B^{n}\times\mathbb S^1,n\in\mathbb N$. By $(M)_{p,q}$ we will denote the 3-manifold obtained from a closed orientable 3-manifold $M$ by $(p,q)$-Dehn surgery along some framed  knot in $M$.     

Let $A$ be a closed connected orientable 3-manifold and $e:\mathbb T^2\times\mathbb B^1\to A$ be an embedding. Then $T=e(\mathbb T^2\times\{0\})$ is a two-dimensional torus with a tubular neighborhood $N(T)=e(\mathbb T^2\times\mathbb B^1)$  with boundaries  $T_-=e(\mathbb T^2\times\{-1\})$ and $T_+=e(\mathbb T^2\times\{1\})$. Let  $\bar  A=(P^3\times\mathbb S^0)\sqcup (A\setminus{\rm int}\,N(T))$  и $\psi:\partial P^3\times\mathbb S^0\to \partial N(T)$ such a homeomorphism that for the meridian $c\subset \partial P^3$ the knots $\psi(c\times\{-1\})\subset T_-,\,\psi(c\times\{1\})\subset T_+$ are homotopic in $N(T)$. The manifold
\begin{equation}\label{P12}
A^T=\bar A/_{\psi},
\end{equation}
where $\bar A/_{\psi}$ -- the quotient space obtained from $\bar A$ by the minimal equivalence relation $\sim$ for which $x\sim\psi(x)$, is called {\it surgery of $A$ along the torus $T$}. Denote by $p:\bar A\to A^T$ the natural projection. The solid tori $P_T^1=p(P^3\times\{-1\})$, {  $P_T^2=p(P^3\times\{1\})$} in the manifold $A^T$ is called {\it associated with $T$}.

In the case when the torus $T$ is compressible in the manifold $A$, there is an {\it associated with $T$ 2-sphere} $\Lambda_T\subset A$, constructed as follows. According to \cite[Lemma 6.1]{He}, there exists a non-contractible knot $c\subset T$ and a 2-disk $D\subset A$ with a tubular neighborhood $N(D)$ such that $D\cap T=\partial D=c$. By construction, the boundary of the set $N(T)\cup N(D)$ consists of a 2-sphere $\Lambda_T$ and a 2-torus $T_0$ (see Fig. \ref{pict818}), and $\Lambda_T\cap T=\varnothing$.
\begin{figure}[ht]
\centering{\includegraphics
[scale=0.4]{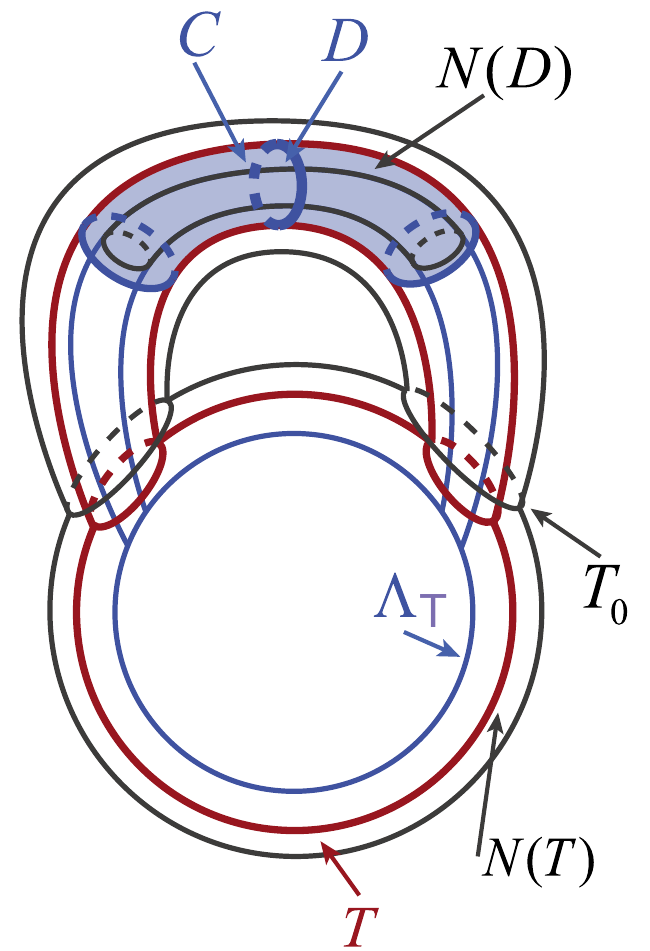}}
\caption{Sphere $\Lambda_T$}\label{pict818}
\end{figure}	
 
Denote by $N(\Lambda_T)$ the tubular neighborhood of the sphere $\Lambda_T$.  
There are two possibilities: 1) $\Lambda_T$ divides $A$, 2) $\Lambda_T$ does not divide $A$. In case 1), the manifold $A\setminus{\rm int}\,N(\Lambda_T)$ consists of two connected components $A_1, A_2$, exactly one of which (assume for definiteness $A_2$) contains the disk $D$. Denote by $\tilde A_i,i=1,2$ the manifold obtained from the manifold $A_i$ by attaching a 3-ball $B_i$ to its boundary. In case 2), denote by $\tilde A$ the manifold obtained from the manifold $A\setminus{\rm int}\,N(\Lambda_T)$ by attaching two 3-balls $B_1,B_2$ to its boundary.

\begin{lemma}\label{decoo}In cases 1), 2), the manifolds $A, A^T$ have the following forms, respectively:

1) $A\cong\tilde A_1\#\tilde A_2$, $A^T\cong L_{p,q}\#\tilde A_1\sqcup(\tilde A_2)_{p,q}$;

2) $A\cong E\#\tilde A$, $A^T\cong L_{p,q}\#(\tilde A)_{p,q}$.

Moreover, one of the associated solid tori (assume $P^2_T$ for definiteness) is a subset of $L_{p,q}$ such that $L_{p,q}\setminus{\rm int}\,P^2_T\cong P^3$.
\end{lemma}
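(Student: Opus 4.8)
The plan is to analyze the surgery construction $A^T=\bar A/_\psi$ explicitly in terms of the 2-sphere $\Lambda_T$ associated with the compressible torus $T$, and to track what happens to each piece of the decomposition of $A$ along $\Lambda_T$. First I would establish the decomposition of $A$ itself. The set $N(T)\cup N(D)$ has boundary $\Lambda_T\sqcup T_0$, where $T_0$ is parallel to $T$; since a regular neighborhood of a 2-sphere bounds a ball on at least one side in an orientable 3-manifold only when the sphere is separating, in case 1) cutting $A$ along $\Lambda_T$ gives $A_1$ and $A_2$ with $D\subset A_2$, and capping with balls yields $A\cong\tilde A_1\#\tilde A_2$ by the definition of connected sum. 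In case 2) $\Lambda_T$ is non-separating, so $A\setminus{\rm int}\,N(\Lambda_T)$ is connected; capping with two balls gives $\tilde A$, and re-inserting $N(\Lambda_T)$ amounts to taking the connected sum of $\tilde A$ with the manifold obtained by gluing the two capped balls via a handle running along the normal $\mathbb S^1$ — which is exactly $\mathbb S^2\times\mathbb S^1=E$, so $A\cong E\#\tilde A$. The key point making this clean is that $D$ lies entirely on one side, so $N(T)\cup N(D)$ is, up to the sphere and torus boundary components, contained in the $A_2$ (resp. $\tilde A$) piece.

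Next I would compute $A^T$. By construction $\bar A=(P^3\times\mathbb S^0)\sqcup(A\setminus{\rm int}\,N(T))$, and the gluing $\psi$ sends the meridians $c\times\{\pm1\}$ of the two solid tori to homotopic knots in $N(T)$. Removing ${\rm int}\,N(T)$ from $A$ and regluing the two solid tori $P^3\times\{-1\}$ and $P^3\times\{1\}$ can be done in two stages: gluing $P^3\times\{-1\}$ along $T_-$ fills in the torus boundary, and since the meridian $c$ is sent to a curve homotopic to the core direction inside $N(T)$, this turns the $N(T)$-region into a solid torus again in a controlled way; the second solid torus $P^3\times\{1\}$ is then attached along $T_+$, and the homotopy condition on the meridians forces the result near $T$ to be a lens space summand $L_{p,q}$ determined by the framing, together with the associated solid tori $P_T^1, P_T^2$. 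I would formalize this by observing that the union $N(T)\cup N(D)\cup P^3\times\mathbb S^0$, after the identification, is a once-punctured lens space (the puncture being the $\Lambda_T$ boundary), which splits off as the $L_{p,q}$ summand, while the Dehn filling it induces on the $A_2$-side (resp. $\tilde A$-side) through the disk $D$ produces $(\tilde A_2)_{p,q}$ (resp. $(\tilde A)_{p,q}$). The $\tilde A_1$ piece in case 1) is untouched except for the connected-sum neck at $\Lambda_T$, so it survives as a summand $L_{p,q}\#\tilde A_1$ in one component, while the other component is $(\tilde A_2)_{p,q}$; in case 2) everything is connected, giving $L_{p,q}\#(\tilde A)_{p,q}$.

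Finally, for the last claim, I would note that $P^2_T=p(P^3\times\{1\})$ sits inside the $L_{p,q}$ summand by the above identification: the lens space $L_{p,q}$ arises precisely as the union of the solid torus coming from the filled $N(T)$ region and the solid torus $P^2_T$, glued along their boundary torus according to the framing data $(p,q)$; removing the interior of $P^2_T$ from $L_{p,q}$ therefore leaves the other solid torus, which is a copy of $P^3=\mathbb B^2\times\mathbb S^1$.

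The main obstacle I expect is keeping the bookkeeping of the gluing map $\psi$ honest: one must verify that the two ways of filling the region around $T$ — via the two associated solid tori versus via the disk $D$ — interact so as to genuinely produce a lens-space connected summand rather than some more complicated piece, and that the Dehn-surgery slope $(p,q)$ on the $D$-side matches the lens space $L_{p,q}$ that splits off. This is essentially a careful application of the fact (Lemma 6.1 of \cite{He} as cited) that a compressible torus bounds on one side after puncturing, combined with a standard ``sliding the surgery across a compressing disk'' argument; the delicate part is showing the framings are consistent, which amounts to checking that the meridian of $P^3$ and the boundary of $D$ define the same slope data on the relevant torus, so that no extra twisting is introduced.
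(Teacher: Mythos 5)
Your overall plan is the same as the paper's: split $A$ along the associated sphere $\Lambda_T$ to obtain the connected-sum decompositions, then trace the torus surgery and identify a lens-space summand plus a Dehn surgery on $\tilde A_2$ (resp.\ $\tilde A$). That much matches.

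However, your proposed formalization of the lens-space summand is not right. You claim that ``the union $N(T)\cup N(D)\cup P^3\times\mathbb S^0$, after the identification, is a once-punctured lens space.'' This is untenable for two reasons. First, ${\rm int}\,N(T)$ is deleted by the surgery, so that set does not survive into $A^T$. Second, and more importantly, in case 1) the torus $T$ is separating in $A$ (this is forced by $A^T$ having two components, equivalently by the fact that both $T_-$ and $T_+$ bound solid tori in $\tilde A_2$), so the two surgery solid tori $P^3\times\{-1\}$ and $P^3\times\{1\}$ lie in \emph{different} connected components of $A^T$. The $L_{p,q}$ summand therefore involves exactly one of them, not both. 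The paper's bookkeeping, which yours misses, is: note that $B_2\cup N(D)$ is a solid torus in $\tilde A_2$, so $T_-$ and $T_+$ bound nested solid tori $P_-\subset P_+$ in $\tilde A_2$ with $B_2\subset P_-\subset P_+$; then $L_{p,q}=P_-\cup_\psi(P^3\times\{-1\})$ is two solid tori glued along their boundary torus, while $(\tilde A_2)_{p,q}=(\tilde A_2\setminus{\rm int}\,P_+)\cup_\psi(P^3\times\{1\})$ is a Dehn surgery. This gives the stated forms for the two components of $A^T$, and makes the final clause immediate: the associated solid torus inside the lens space has complement $P_-\cong P^3$. A smaller point: your parenthetical ``since the meridian $c$ is sent to a curve homotopic to the core direction inside $N(T)$'' overstates the hypothesis on $\psi$, which only requires the two images $\psi(c\times\{-1\})$ and $\psi(c\times\{1\})$ to be homotopic \emph{to each other} in $N(T)$, not to the core of $N(T)$; it is this matching, not any relation to the core, that keeps the lens-space parameter and the surgery slope both equal to $(p,q)$.
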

\begin{proof} The representation of the manifold $A$ in the form 1) $A\cong\tilde A_1\#\tilde A_2$, 2) $A\cong E\#\tilde A$ in the cases where the sphere $\Lambda_T$ does or does not divide it, respectively, is a classical result of three-dimensional topology (see, for example, \cite[Theorem 1.5]{Hat}). In case 1), $B_2\cup N(D)$ is a solid torus in $\tilde A_2$ and, consequently, the tori $T_-,T_+$ bound solid tori $P_-,P_+$ in $\tilde A_2$. For definiteness, assume that $B_2\subset P_-\subset P_+$. Then $P_-\cup_\psi(P^3\times\{-1\})$ is a lens space $L_{p,q}$ and $(\tilde A_2\setminus P_+)\cup_\psi(P^3\times\{-1\})=(\tilde A_2)_{p,q}$. Thus, the manifold $A^T\cong L_{p,q}\#\tilde A_1\sqcup(\tilde A_2)_{p,q}$ and $L_{p,q}\setminus{\rm int}\,P^2_T\cong P^3$.
	In case 2), the proof is similar.
\end{proof}

A manifold $A$ is called {\it $E$-irreducible} if it does not have a summand $E$ in its connected sum decomposition and $\beta_1(A)\leqslant 1$. 

\begin{lemma}\label{Ene} If $A$ is an $E$-irreducible manifold, then any two-dimensional torus $T$ is compressible in it, and the manifold $A^T$ consists of two connected components, at least one of which is $E$-irreducible.
\end{lemma}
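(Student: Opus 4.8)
The plan is to reduce both claims to Lemma \ref{decoo} by analyzing the two cases there. Let $A$ be $E$-irreducible and let $T\subset A$ be a 2-torus. First I would show $T$ is compressible. The standard dichotomy from 3-manifold topology (e.g.\ \cite{He}) says that an incompressible torus in an orientable 3-manifold is either $\pi_1$-injective with infinite image, or bounds a region forcing structure incompatible with our hypotheses. If $T$ were incompressible and non-boundary-parallel, then $\pi_1(T)=\mathbb Z^2$ injects into $\pi_1(A)$, so $A$ contains a $\mathbb Z\oplus\mathbb Z$ subgroup; combined with the fact that $A$ is a closed orientable prime 3-manifold with $\beta_1(A)\le 1$, I would derive a contradiction (the first homology of $A$ cannot receive $\mathbb Z^2$ as it would force $\beta_1(A)\ge 2$ unless the torus is null-homologous, and then a half-lives-half-dies / Mayer--Vietoris argument on $A$ cut along $T$ forces compressibility anyway). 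If $T$ is boundary-parallel it is vacuously compressible in the relevant sense (it cobounds a product region whose other boundary is compressible). So $T$ is compressible, and the associated sphere $\Lambda_T$ and the decompositions of Lemma \ref{decoo} apply.

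Next I would split into the two cases of Lemma \ref{decoo}. In case 2), $A\cong E\#\tilde A$; but $E$-irreducibility of $A$ forbids a summand $E$, so case 2) cannot occur. Hence we are in case 1): $A\cong\tilde A_1\#\tilde A_2$ and $A^T\cong (L_{p,q}\#\tilde A_1)\sqcup(\tilde A_2)_{p,q}$, which is manifestly a disjoint union of two connected 3-manifolds. It remains to show at least one component is $E$-irreducible. Since $A\cong\tilde A_1\#\tilde A_2$ has no $E$ summand, neither $\tilde A_1$ nor $\tilde A_2$ has an $E$ summand. I would then argue that $L_{p,q}\#\tilde A_1$ has no $E$-summand: a lens space summand is never $E=\mathbb S^2\times\mathbb S^1$ (lens spaces have finite $\pi_1$), and $\tilde A_1$ contributes none, so by uniqueness of prime decomposition the summand $E$ does not appear. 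For the Betti number condition, $\beta_1(A)=\beta_1(\tilde A_1)+\beta_1(\tilde A_2)\le 1$, so at least one of $\beta_1(\tilde A_1),\beta_1(\tilde A_2)$ is $0$; say $\beta_1(\tilde A_1)=0$. Then $\beta_1(L_{p,q}\#\tilde A_1)=\beta_1(\tilde A_1)=0\le 1$, so the component $L_{p,q}\#\tilde A_1$ is $E$-irreducible. (If instead $\beta_1(\tilde A_2)=0$, one checks $(\tilde A_2)_{p,q}$ still has $\beta_1\le 1$: Dehn surgery changes $\beta_1$ by at most one, but since $\tilde A_2$ here is a summand of $A$ and $\Lambda_T$ separates, the framed knot is null-homologous in $\tilde A_2$ and the surgery coefficient is the lens-producing one, keeping $\beta_1\le 1$; and no $E$ summand is created since the surgered piece is by Lemma \ref{decoo} built from a solid torus glued to $P^3$, i.e.\ it is a lens space connect-summed onto $\tilde A_2$.) Either way one component is $E$-irreducible.

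The main obstacle I anticipate is the compressibility step: carefully ruling out an essential (incompressible, non-boundary-parallel) torus in a closed orientable prime 3-manifold with $\beta_1\le 1$ that is moreover $E$-irreducible. A null-homologous essential torus can occur in general closed 3-manifolds (e.g.\ in many graph manifolds), so the argument genuinely needs the $E$-irreducibility and $\beta_1\le 1$ hypotheses together, presumably via the prime decomposition plus a homological count after cutting along $T$ (half-lives-half-dies for the cut manifold forces the torus to compress on one side). A secondary technical point is tracking the Betti number of the Dehn-surgered piece $(\tilde A_2)_{p,q}$ and confirming no $\mathbb S^2\times\mathbb S^1$ summand is introduced; Lemma \ref{decoo}'s explicit description of the surgery as attaching $P^3$ to a solid torus neighborhood makes this a bounded case check rather than a general surgery estimate.
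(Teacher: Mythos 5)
Your proposal follows the same overall route as the paper (reduce to Lemma~\ref{decoo}, rule out case~2 because $A$ has no $E$-summand, and then argue about the two components of $A^T$ in case~1), but there is a concrete gap in the final step that the paper is careful to avoid. You assert that ``a lens space summand is never $E=\mathbb S^2\times\mathbb S^1$ (lens spaces have finite $\pi_1$),'' and on that basis conclude $L_{p,q}\#\tilde A_1$ is always $E$-irreducible. In the paper's own conventions, however, $L_{p,q}$ explicitly includes the degenerate case $L_{0,1}=\mathbb S^2\times\mathbb S^1=E$ (this identification appears right at the start of the surgery section), and the output of Lemma~\ref{decoo} is precisely parametrized so that $p=0$ can occur. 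When $p=0$ the component $L_{0,1}\#\tilde A_1=E\#\tilde A_1$ has an $E$-summand and is not $E$-irreducible, so your argument for that component fails exactly there. The paper handles this by splitting on $p$: for $p\neq 0$ it takes $L_{p,q}\#\tilde A_1$ (which works because $\beta_1(L_{p,q})=0$ and $\beta_1(\tilde A_1)\leqslant\beta_1(A)\leqslant 1$), while for $p=0$ it observes that $A^T\cong E\#\tilde A_1\sqcup\tilde A_2$ with $\tilde A_2$ the untouched other summand, which is $E$-irreducible automatically. Your fallback for the second component, $(\tilde A_2)_{p,q}$, is also hand-wavy (the assertion that the surgered piece is ``a lens space connect-summed onto $\tilde A_2$'' is not what Dehn surgery gives in general), but the paper sidesteps this entirely: it never needs to control $(\tilde A_2)_{p,q}$ for $p,q\neq 0$. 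A related simplification you miss is that the split on which of $\beta_1(\tilde A_1),\beta_1(\tilde A_2)$ vanishes is unnecessary, since $\beta_1(\tilde A_1)+\beta_1(\tilde A_2)=\beta_1(A)\leqslant 1$ already bounds both by $1$.

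On the compressibility step, the paper is even terser than you are: it simply asserts that $\beta_1(A)\leqslant 1$ forces every torus in $A$ to be compressible, with no argument. Your instinct that this step deserves justification is sound, and your sketch (rank of the image of $H_1(T)\to H_1(A)$, half-lives-half-dies after cutting along $T$) is in the right spirit, though it is not yet a proof and contains a muddle between $[T]\in H_2(A)$ being zero and the map $H_1(T)\to H_1(A)$ being zero. Since the paper itself does not supply this argument, I would not count your incomplete treatment here against you, but it should be flagged as a step that neither you nor the paper has actually established.
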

\begin{proof} Since $\beta_1(A)\leqslant 1$, the torus $T$ is compressible in $A$. Then, by Lemma \ref{decoo},  
	$A\cong\tilde A_1\#\tilde A_2$ and $A^T$ is homeomorphic to one of the listed manifolds:
\begin{itemize}
\item $A^T\cong L_{p,q}\#\tilde A_1\sqcup(\tilde A_2)_{p,q},p\neq 0,$
\item $A^T\cong E\#\tilde A_1\sqcup \tilde A_2, p=0.$
\end{itemize} 
Thus, the manifold $A^T$ contains an $E$-irreducible connected component, either $L_{p,q}\#\tilde A_1$, or $\tilde A_2$.
\end{proof}

\section{Topology of the filtration manifolds}
For the manifold $M_j$ of the filtration \eqref{fil}, let $Q_j=\partial M_j$. 

\begin{lemma}\label{compr} Each connected component of the manifold $Q_j,\,j\in\{1,\dots,k-1\}$ in the filtration \eqref{fil} is homeomorphic to $E$.
\end{lemma}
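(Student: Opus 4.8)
The plan is to induct on $j$, building the handle decomposition \eqref{fil} one round handle at a time and tracking how $\partial M_j$ changes. For $j=1$ the component $\mathcal O_1$ is an attracting orbit, so $M_1 = V_{\mathcal O_1}\cong\mathbb B^3\times\mathbb S^1$ (untwisted, by the reduction in Section 5), and hence $Q_1 = \partial M_1\cong\mathbb S^2\times\mathbb S^1 = E$. For the inductive step, pass from $M_{j-1}$ to $M_j = M_{j-1}\cup_{\psi_j} V_{\mathcal O_j}$ by attaching the round handle of index $i_{\mathcal O_j}\in\{0,1\}$ (recall $k_2=0$, $k_3=1$, and $\mathcal O_k$ is the sole repelling orbit, which is excluded since $j\le k-1$). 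I would split into two cases according to the index of $\mathcal O_j$.

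If $i_{\mathcal O_j}=0$ (attracting), then $\mathcal O_j$ lies in a new connected component at level $j$ and contributes a fresh boundary component $\cong\mathbb S^2\times\mathbb S^1=E$, so the claim is immediate for that component; the other components of $Q_j$ are unchanged. If $i_{\mathcal O_j}=1$ (saddle of index $1$), the round $1$-handle $V_{\mathcal O_j}\cong\mathbb B^3\otimes\mathbb S^1$ is glued to $\partial M_{j-1}$ along its base $\Sigma^u_{\mathcal O_j}$, which is a tubular neighborhood of an embedded $2$-torus $T = \Sigma^u_{\mathcal O_j}\cap W^u_{\mathcal O_j}$ (in the untwisted case, $\Sigma^u_{1,+1,+1}\cong\mathbb T^2\times\mathbb B^1$, precisely the data feeding the surgery construction of Section 6). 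The effect on the boundary is exactly the surgery $A\mapsto A^T$ along the torus $T$ defined in \eqref{P12}: removing $\operatorname{int} N(T)$ from the relevant component $A$ of $Q_{j-1}$ and gluing in $P^3\times\mathbb S^0$ along $\psi$. So I would identify the component $A$ of $Q_{j-1}$ that contains the attaching region, note that by the inductive hypothesis $A\cong E=\mathbb S^2\times\mathbb S^1$, and then apply the surgery analysis (Lemmas \ref{decoo}, \ref{Ene}) to conclude that each component of $A^T$ is again homeomorphic to $E$.

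The main obstacle is the last step: showing that surgery along \emph{any} torus $T$ in $E=\mathbb S^2\times\mathbb S^1$ yields only copies of $E$. Here I would use that $\beta_1(E)=1$, so by Lemma \ref{Ene} the torus $T$ is compressible and $E^T$ has a connected component that is $E$-irreducible; combined with the explicit forms in Lemma \ref{decoo} (with $A\cong E\#\tilde A$ forcing $\tilde A\cong\mathbb S^3$, or $A\cong\tilde A_1\#\tilde A_2$ forcing both $\tilde A_i\cong\mathbb S^3$ since $E$ is already prime with $\beta_1=1$), the surgered manifold reduces to $L_{p,q}\#\,\mathbb S^3\sqcup(\mathbb S^3)_{p,q}$ or $L_{0,1}\#\,\mathbb S^3\sqcup\mathbb S^3 = E\sqcup\mathbb S^3$ — and one must rule out the $\mathbb S^3$ summand and the $L_{p,q}$ with $p\ne 0,1$. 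To do this I would bring in that $Q_j$ is the boundary of $M_j\subset M^4$, which constrains its homology (e.g. via the long exact sequence of the pair and the fact that $M^4$ has $\beta_1\le$ the value forced by \eqref{N0}--\eqref{N3}), or, more cleanly, observe directly from the dynamics that $Q_j$ separates attracting/repelling basins and each of its components must have $H_*$ of $E$; the homotopy-sphere summand is then excluded because an $\mathbb S^3$ component of $\partial M_j$ would have to bound a $\mathbb B^4$ inside $M^4$, incompatible with $\mathcal O_k$ being the unique repeller lying above level $j$. I expect the bookkeeping of which lens spaces can appear — i.e. forcing $p\in\{0,1\}$ — to be the genuinely delicate point, and I would handle it by a homological argument on $M_j$ rather than a direct cut-and-paste.
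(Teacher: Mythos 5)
Your proposal has a genuine direction error in the handle/surgery identification. For an untwisted round $1$-handle in dimension $4$, the base is
$\Sigma^u_{1,+1,+1}\cong(\mathbb B^2\times\mathbb S^1)\times\mathbb S^0$, i.e.\ \emph{two disjoint solid tori}, while $\Sigma^s_{1,+1,+1}\cong\mathbb T^2\times\mathbb B^1$; you have these swapped. Consequently, attaching $V_{\mathcal O_j}$ removes two solid tori from $Q_{j-1}$ and glues in $\mathbb T^2\times\mathbb B^1$, so the correct relation is $Q_{j-1}\cong Q_j^{T_j}$ for a torus $T_j\subset Q_j$, not $Q_j\cong Q_{j-1}^{T}$ as your step assumes. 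The map $Q_{j-1}\mapsto Q_j$ is the \emph{inverse} of the surgery operation \eqref{P12}, and this inverse is poorly controlled: the two solid tori being cut out can lie in the same or in different components of $Q_{j-1}$, and gluing in a thickened torus can merge components or change homeomorphism type in ways that Lemmas \ref{decoo} and \ref{Ene} simply do not govern. So your forward induction $j-1\to j$ cannot proceed as stated, and this is not a sign that can be fixed locally -- it is precisely the reason the paper goes the other way.

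The paper's proof inducts \emph{downward}, starting from $Q_{k-1}\cong E$ (the boundary of the repelling orbit's canonical neighborhood) and descending via $Q_{j-1}\cong Q_j^{T_j}$. Two additional ideas close the argument that your sketch leaves open. First, a preliminary Step~I shows no intermediate $Q_j$ ($k_0<j<k-1$) can contain an $E$-irreducible component: by Lemma~\ref{Ene}, such a component would propagate down through the successive surgeries to $Q_{k_0}$, which is a disjoint union of copies of $E$ -- a contradiction. Second, Step~II enumerates (via Lemma~\ref{decoo} applied with $A\cong E$) the possible homeomorphism types of new components of $Q_{j-1}$ and uses the facts that $H_1\bigl((\mathbb S^3)_{p,q}\bigr)\cong\mathbb Z/q\mathbb Z$ and that $L_{p,q}$ with $p\neq 0$ is $E$-irreducible to eliminate all options except $E$, precisely by invoking Step~I. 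In contrast, your plan defers the ``$p\in\{0,1\}$'' bookkeeping to a speculative homological argument on $(M_j,\partial M_j)$ that is never carried out; the paper does not need it because the persistence argument of Step~I already forbids the unwanted lens-space summands. You would need to correct the $\Sigma^u/\Sigma^s$ identification, reverse the direction of your induction, and supply something playing the role of Step~I to make the argument go through.
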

\begin{proof} From the decomposition \eqref{fil} it follows that $Q_j$ consists of $j$ copies of the manifold $E$ for $j\in\{1,\dots,k_0\}$ and $Q_{k-1}\cong E
	$. From formula \eqref{rouhan} it follows that for $j\in\{k_0+1,\dots,k-1\}$, $Q_{j-1}\cong Q_{j}^{T_j}$ for some torus  $T_j\subset Q_j$.

We will break down the further proof into steps.

{\it Step I. Let us show that the manifold $Q_j,j\in\{k_0+1,\dots,k-2\}$ does not contain $E$-irreducible connected components.}

Assuming the contrary, we obtain that for some $j\in\{k_0+1,\dots,k-2\}$, the manifold $Q_j$ contains an $E$-irreducible connected component $A$ and $T_j\subset A$. Since $Q_{j-1}\cong Q_{j}^{T_j}$, by Lemma \ref{decoo}, the manifold $Q_{j-1}$ contains an $E$-irreducible connected component. Continuing this process, we arrive at the conclusion that the manifold $Q_{k_0}$ has an $E$-irreducible connected component. This is a contradiction. 

{\it Step II. By induction on $i\in\{1,\dots,k_1\}$, we will show that each connected component of the manifold $Q_{k-i}$ is homeomorphic to $E$.} 

Since $Q_{k-1}\cong E$, the base of the induction is proven. Assume that each connected component of the manifold $Q_{k-i}$ is homeomorphic to $E$. Then the connected components of the manifold $Q_{k-i-1}$ are either homeomorphic to $E$ or, by Lemma \ref{decoo}, have one of the following forms:
\begin{itemize}
\item $L_{p,q}\sqcup (E)_{p,q}$;
\item  $L_{p,q}\# E\sqcup (\mathbb S^3)_{p,q}$;
\item $L_{p,q}\# (\mathbb S^3)_{p,q}$. 
\end{itemize}

According to \cite[Theorem 3.1]{Gui}, $H_1((\mathbb S^3)_{p,q})\cong\mathbb Z/q\mathbb Z$ and, consequently, the manifold $(\mathbb S^3)_{p,q}$ is $E$-irreducible for $q\neq 0$. From \cite[Remark, Corollary 8.3]{Gab} it follows that the manifold $(\mathbb S^3)_{1,0}$ is either homeomorphic to $E$ or $E$-irreducible. Since the manifolds $L_{p,q},p\neq 0$ and $L_{p,q}\# (\mathbb S^3)_{p,q},pq\neq 0$ are also $E$-irreducible, then, by Step 1, each connected component of the manifold $Q_{j-1}$ is homeomorphic to $E$.
\end{proof}

\section{Topology of the carrying manifold}
In this section, we will prove Theorem \ref{s3s1s}. To do this, note that in the filtration \eqref{fil}, $M^4\setminus{\rm int}\,M_{k-1}\cong P^4$. Then, by \cite{Max}), it suffices to show that $M_{k-1}\cong P^4$. This fact will follow from the following lemma.
\begin{lemma}\label{UiL} Each connected component of the manifold $M_j,\,j\in\{1,\dots,k-1\}$ in the filtration \eqref{fil} is homeomorphic to $P^4$.
\end{lemma}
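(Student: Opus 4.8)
\textbf{Proof plan for Lemma \ref{UiL}.}

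The plan is to argue by induction on $j$, tracking simultaneously the topology of $M_j$ and of its boundary $Q_j$, which by Lemma \ref{compr} we already know to be a disjoint union of copies of $E=\mathbb S^2\times\mathbb S^1$. For the base case $j\in\{1,\dots,k_0\}$, each $\mathcal O_j$ is an attracting orbit, so $M_j$ is a disjoint union of $j$ canonical neighborhoods of attracting orbits, each homeomorphic to the untwisted round $0$-handle $\mathbb B^3\times\mathbb S^1=P^4$; there is nothing to prove. For the inductive step we pass from $M_{j-1}$ to $M_j$ by attaching the round $1$-handle $V_{\mathcal O_j}\cong\mathbb B^3\,{\times}\,\mathbb S^1$ (all saddle orbits are untwisted by the reduction in Section 5) along an embedding $\psi_j\colon\Sigma^u_{\mathcal O_j}\to\partial M_{j-1}=Q_{j-1}$, where the base $\Sigma^u_{\mathcal O_j}\cong\mathbb S^1\times\mathbb S^1\times\{pt\}$ thickened is $\mathbb S^1\times\mathbb B^1\times\mathbb S^1$, i.e. $\psi_j$ attaches $V_{\mathcal O_j}$ along a solid-torus-times-interval neighborhood of an embedded $2$-torus region; equivalently, on the level of boundaries, $Q_j$ is obtained from $Q_{j-1}$ by the surgery along a $2$-torus described in Section 6. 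The heart of the argument is to show that this handle attachment, performed on a disjoint union of copies of $P^4$ and producing a boundary that is again a disjoint union of copies of $E$, can only produce a disjoint union of copies of $P^4$ again.

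The key step is therefore the following local statement: if $W$ is a disjoint union of copies of $P^4=\mathbb B^3\times\mathbb S^1$ and $W'=W\cup_{\psi}(\mathbb B^3\,{\times}\,\mathbb S^1)$ is the result of attaching one round $1$-handle, and if every component of $\partial W'$ is homeomorphic to $E$, then every component of $W'$ is homeomorphic to $P^4$. To prove this I would exploit that $\partial(P^4)=E$ has a preferred $\mathbb S^2$ factor: the sphere $\mathbb S^2\times\{pt\}$ bounds a ball $\mathbb B^3$ in $P^4$, and $P^4$ is recovered from that ball by the mapping-torus-type construction. Attaching the round $1$-handle either connects two distinct components of $W$ or attaches both feet to the same component. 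In the first case one checks the result is again $P^4$ by writing both pieces as $\mathbb B^3\times\mathbb S^1$ and observing that the attaching region, being a neighborhood of a torus that must be compressible in each $E$ boundary component (since $\beta_1(E)=1$), is isotoped to a standard position using Lemma \ref{decoo} and the fact that $L_{p,q}\setminus\mathrm{int}\,P^2_T\cong P^3$; since the resulting boundary is forced to be $E$ rather than any $L_{p,q}\#E$ with $p\neq 0$, the gluing is the standard one and $W'\cong P^4$. In the second case the handle is attached along two parallel copies of a compressing annulus/torus inside one $E$, and the same dichotomy from Lemma \ref{decoo} together with the constraint $\partial W'\cong E\sqcup\cdots\sqcup E$ pins down the attaching map up to isotopy, so the new component is again $\mathbb B^3\times\mathbb S^1$ (possibly splitting off one more $P^4$ component corresponding to a newly created attracting-type region). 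Throughout, the point is that the already-established control on the boundary (Lemma \ref{compr}) rules out all the ``bad'' outputs of the surgery in Lemma \ref{decoo}, forcing the $4$-dimensional filling to be standard.

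I expect the main obstacle to be this last reduction: going from the classification of the \emph{boundary} $Q_j$ to the classification of the $4$-manifold $M_j$ that fills it. Knowing $\partial M_j\cong E\sqcup\cdots\sqcup E$ does not a priori determine $M_j$, so one genuinely needs to use the handle structure and the inductive hypothesis $M_{j-1}\cong P^4\sqcup\cdots\sqcup P^4$, analyzing how $\psi_j$ sits relative to the product structures. Concretely, the difficulty is to show that the embedded $2$-torus $T_j\subset Q_{j-1}$ along which the surgery occurs, being compressible, has its compressing disk lying in a standard position inside the copy of $P^4$ it bounds into — so that cutting along it genuinely decomposes $M_{j-1}$ as a connected sum of balls and $P^4$'s in a way compatible with the handle attachment. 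This is where one must invoke the smoothability of $M^4$ (Theorem \ref{smoothing}), uniqueness of the $\mathbb B^3\times\mathbb S^1$ structure, Haken-type arguments for incompressible/compressible surfaces in the $3$-dimensional boundaries, and the fact (already used in Lemma \ref{compr}) that $L_{p,q}$ with $p\neq 0$ or $(\mathbb S^3)_{p,q}$ with $q\neq 0$ is $E$-irreducible, hence cannot appear as a boundary component — which forces $p=q$ trivial and the gluing standard. Once $M_{k-1}\cong P^4$ is obtained, the lemma combined with $M^4\setminus\mathrm{int}\,M_{k-1}\cong P^4$ and the cited result of \cite{Max} yields $M^4\cong\mathbb S^3\times\mathbb S^1$, completing Theorem \ref{s3s1s}.
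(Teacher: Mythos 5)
Your outline matches the paper's: the base case (attracting orbits give disjoint $P^4$'s), the inductive step organized by whether the two feet $P^{T_j}_1,P^{T_j}_2$ of the round $1$-handle lie in the same or in distinct components of $M_{j-1}$, and the correct identification of the crux — that a $3$-manifold boundary does not determine its $4$-dimensional filling, so one must really use the inductive hypothesis $M_{j-1}\cong P^4\sqcup\cdots\sqcup P^4$ together with Lemma \ref{compr} and Lemma \ref{decoo}. You even pinpoint the right input from Lemma \ref{decoo}: that one of the associated solid tori, say $P^{T_j}_2$, has complement $\cong P^3$ in its boundary component $A_2\cong E$.

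However, at precisely the point you flag as the main obstacle, the proposal stops being a proof and becomes a list of tools one might invoke (``smoothability,'' ``uniqueness of the $\mathbb B^3\times\mathbb S^1$ structure,'' ``Haken-type arguments,'' ``pins down the attaching map up to isotopy''). The paper closes this gap with a concrete mechanism that your proposal does not contain: since $A_2\setminus\mathrm{int}\,P^{T_j}_2\cong P^3$, the component $U_2\cong P^4$ can be re-coordinatized as $P^3\times[1,2]$ with $P^{T_j}_2=P^3\times\{1\}$, so attaching the round handle's core cylinder $P^3\times[-1,1]$ to $U_2$ along $P^{T_j}_2$ just \emph{extends the collar}: $(P^3\times[-1,1])\cup U_2\cong P^3\times[-1,2]$, and $U\cong U_1\cong P^4$ in case 1. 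Your proposal never makes this collar-absorption explicit; ``the gluing is the standard one'' is the conclusion, not an argument. The gap is even more significant in case 2 (both feet on the same component): the paper's argument there is to cut $U$ along a $3$-ball $B_0\subset\partial\tilde B$, where $\tilde B$ is a $4$-ball in $U_2$ meeting $A_2$ in the $3$-ball $B$ bounded by the associated sphere $\Lambda_{T_j}$ and containing $P^{T_j}_1$, thereby reducing to the collar-extension of case 1 and concluding $\tilde U\cong\mathbb B^4$, hence $U\cong P^4$ after re-identifying the two copies of $B_0$. This cutting trick is absent from your proposal; nothing you write would produce it. A small further slip: you parenthetically suggest the round $1$-handle attachment may ``split off one more $P^4$ component,'' but attaching a (round) $1$-handle can only merge or preserve the number of components of the $4$-manifold — it is the \emph{boundary} $Q_j$ that may gain a component (case 1 of Lemma \ref{decoo}), not $M_j$ itself. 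So: right skeleton, right diagnosis of the difficulty, but the two decisive steps of the paper (the $P^3\times[1,2]$ collar identification, and the $B_0$-cut in case 2) are missing, leaving a genuine gap.
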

\begin{proof} From the decomposition \eqref{fil} it follows that $M_j$ consists of $j$ copies of the manifold $P^4$ for $j\in\{1,\dots,k_0\}$. From formula \eqref{rouhan} it follows that for $j\in\{k_0+1,\dots,k-1\}$, the manifold $M_j$ is obtained from the manifold $M_{j-1}$ in the following way. Let $\bar M_{j-1}=M_{j-1}\sqcup (P^3\times\mathbb B^1)$ and $\varphi_j:P^3\times\mathbb S^0\to Q_{j-1}$ be an embedding such that $\varphi_j(P^3\times\{-1\})=P^{T_j}_1,\varphi_j(P^3\times\{1\})=P^{T_j}_2$ are the solid tori associated with $T_j$. Then
\begin{equation}\label{P123}
M_j\cong\bar M_{j-1}/_{\varphi_j}.
\end{equation}

By induction on $i\in\{0,\dots,k-1\}$, we will show that each connected component of the manifold $M_{k_0+i}$ is homeomorphic to $P^4$.

Since $M_{k_0}$ consists of $k_0$ copies of the manifold $P^4$, the base of the induction is proven. Assume that each connected component of the manifold $M_{k_0+i}=M_{j-1}$ is homeomorphic to $P^4$. Then the connected components of the manifold $M_j$ are either homeomorphic to $P^4$, or the component $U$ is obtained from the connected components of $M_{j-1}$ by the formula \eqref{P123}. 
Let $U_l,l\in\{1,2\}$ be the connected component of the manifold $M_{j-1}$ containing the solid torus $P^{T_j}_{l}$ and $A_l=\partial U_l$. According to Lemma \ref{compr}, $A_l\cong E$. By Lemma \ref{decoo}, the complement of one of the solid tori (assume for definiteness $P^{T_j}_{2}$) is homeomorphic to a solid torus, that is, $(A_2\setminus{\rm int}\,P_2^{T_j})\cong P^3$. Next, consider two possible cases: 1) $U_1\neq U_2$; 2) $U_1=U_2$.

In case 1) $U\cong U_1\cup (P^3\times[-1,1])\cup U_2$, where by the induction hypothesis $U_1\cong U_2\cong P^4$ and $P^{3}\times\{-1\}=P^{T_j}_1,P^{3}\times\{1\}=P^{T_j}_2$  (see Fig. \ref{dva}).  
\begin{figure}[ht] 
\centerline{\includegraphics[width
=0.7\textwidth]{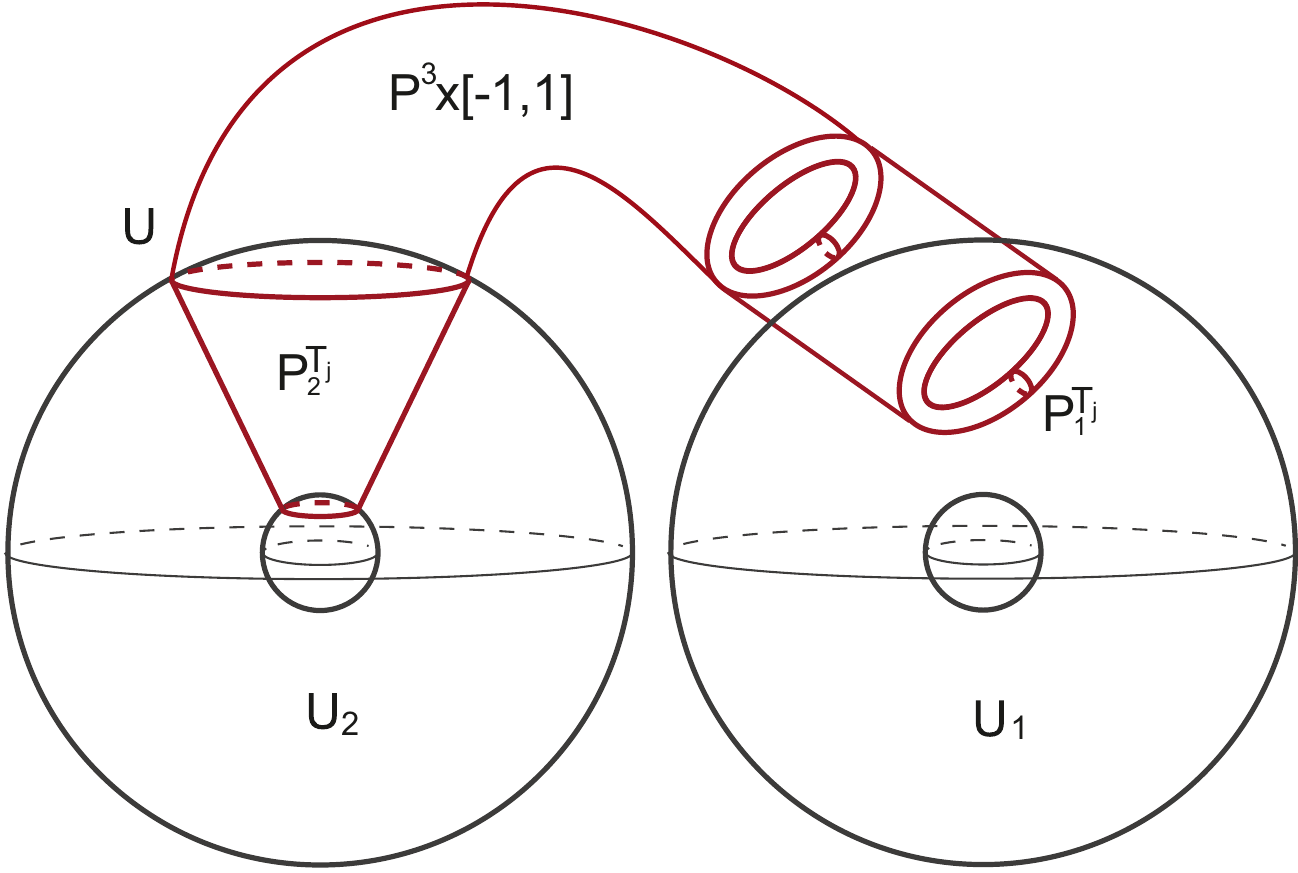}}
\caption{Illustration for the proof of Lemma \ref{UiL}, case 1)}
\label{dva}
\end{figure}
Since  $(A_2\setminus{\rm int}\,P_2^{T_j})\cong P^3$, then $U_2\cong P^3\times[1,2]$. Then $(P^3\times[-1,1])\cup U_2\cong P^3\times[-1,2]$ and, consequently, $U\cong U_1$.

In case 2) $U\cong (P^3\times[-1,1])\cup U_2$, where by the induction hypothesis $U_2\cong P^4$ and $P^{3}\times\{-1\}=P^{T_j}_1,P^{3}\times\{1\}=P^{T_j}_2$  (see Fig. \ref{odin}).  
\begin{figure}[ht] 
\centerline{\includegraphics[width
=0.6\textwidth]{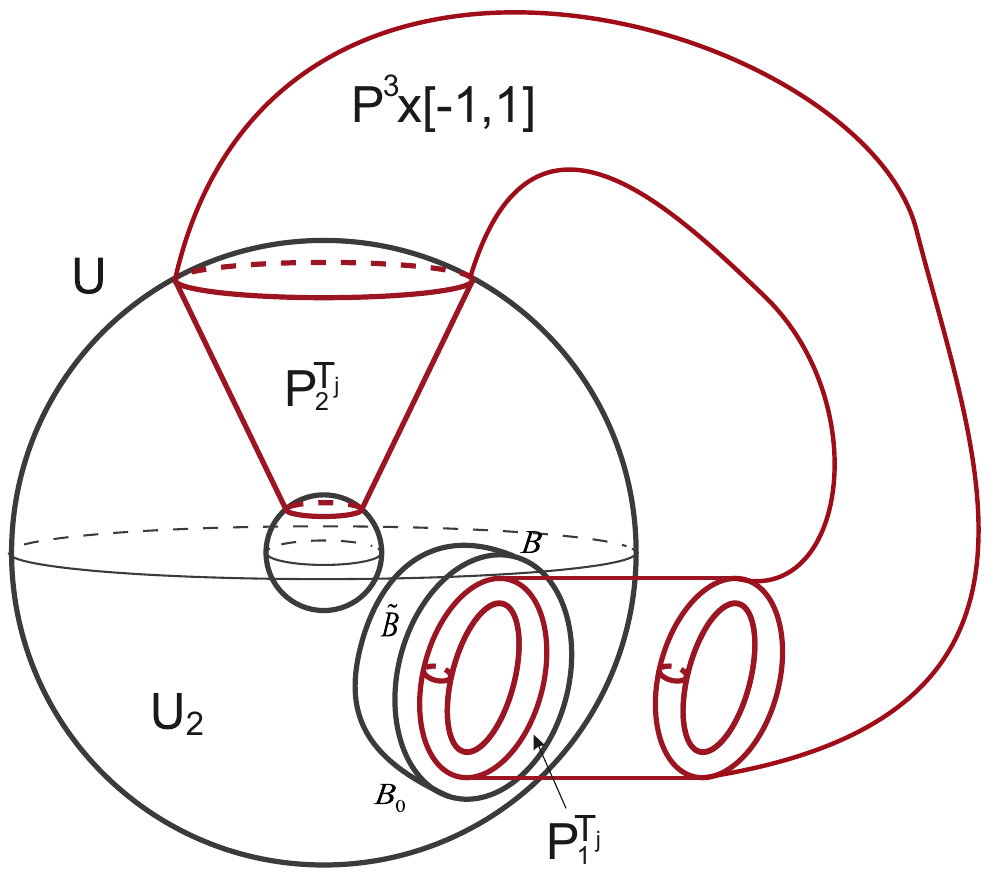}}
\caption{Illustration for the proof of Lemma \ref{UiL}, case 2)}
\label{odin}
\end{figure}
Since $(A_2\setminus{\rm int}\,P_2^{T_j})\cong P^3$, то $U_2\cong P^3\times[1,2]$. Since the solid tori $P_1^{T_j},P_2^{T_j}$ do not intersect the associated sphere $\Lambda_{T_j}$, it bounds a 3-ball $B$ in $U_2$ such that $P_1^{T_j}\subset {\rm int}\,B$ and $B\cap P_2^{T_j}=\varnothing$. 
Choose a 4-ball $\tilde B$ in $U_2$ such that $\tilde B\cap A_2=\partial\tilde B\cap A_2=B$. Then $B_0={\rm cl}\,(\partial\tilde B\setminus B)\cong\mathbb B^3$. Let $\tilde U_2={\rm cl}\,(U_2\setminus\tilde B)$ и $\tilde U={\rm  cl}\,(U\setminus B_0)$. By construction, $\tilde U$ is a connected 4-manifold with boundary, from which the manifold $U$ is obtained by identifying two disjoint copies of the 3-ball $B_0$ on its boundary. Then the proof of the theorem reduces to proving that $\tilde U\cong \mathbb B^4$.
 
To prove this fact, let us represent $\tilde U$ in the following form: $\tilde U=\tilde U_2\cup (P^3\times[-1,1]\cup\tilde B$. Since $\tilde U_2\cong U_2$, it follows that $\tilde U_2\cup(P^3\times[-1,1])\cong P^3\times[-1,2]$ and, consequently, $\tilde U\cong\tilde B$.
\end{proof}

\end{document}